\newtheorem{thm}[equation]{Theorem}
\newtheorem{cor}[equation]{Corollary}
\newtheorem{prop}[equation]{Proposition}
\newtheorem{lem}[equation]{Lemma}
\newtheorem{question}[equation]{Question}
\newtheorem{conjecture}[equation]{Conjecture}
\numberwithin{equation}{section}
\theoremstyle{definition}
\newtheorem{defin}[equation]{Definition}
\newtheorem{rem}[equation]{Remark}
\newcommand{\A}{\ensuremath{{\mathbb{A}}}}
\newcommand{\C}{\ensuremath{{\mathbb{C}}}}
\newcommand{\Z}{\ensuremath{{\mathbb{Z}}}}
\renewcommand{\P}{\ensuremath{{\mathbb{P}}}}
\newcommand{\Q}{\ensuremath{{\mathbb{Q}}}}
\newcommand{\F}{\ensuremath{{\mathbb{F}}}}
\newcommand{\G}{\ensuremath{{\mathbb{G}}}}
\DeclareMathOperator{\End}{End}
\DeclareMathOperator{\Per}{Per}
\DeclareMathOperator{\Spec}{Spec}
\DeclareMathOperator{\SL}{SL}
\DeclareMathOperator{\Aut}{Aut}
\DeclareMathOperator{\tr}{tr}
\DeclareMathOperator{\ord}{ord}
\begin{document}


\title[Zeta Functions of Dynamically Affine Maps]{The Artin-Mazur Zeta Function of a Dynamically Affine Rational Map in Positive Characteristic}

\author[A. Bridy]{Andrew Bridy}
\address{Andrew Bridy\\Department of Mathematics\\ University of Wisconsin-Madison\\
Madison, WI 53706, USA}
\email{bridy@math.wisc.edu}

\date{}

\begin{abstract}
A dynamically affine map is a finite quotient of an affine morphism of an algebraic group. We determine the rationality or transcendence of the Artin-Mazur zeta function of a dynamically affine self-map of $\P^1(k)$ for $k$ an algebraically closed field of positive characteristic.
\end{abstract}

\subjclass[2010]{Primary 37P05; Secondary 11G20, 11B85}

\keywords{Arithmetic Dynamics, Algebraic Groups, Automatic Sequences, Finite Fields}

\maketitle

\section{The Artin-Mazur Zeta Function}

Let $X$ be a set and let $f:X\to X$ define a dynamical system. Let $\Per_n(f)=\{x\in X:f^n(x)=x\}$, where $f^n$ denotes the composition of $f$ with itself $n$ times. The Artin-Mazur zeta function of this dynamical system is the formal power series given by
\begin{equation*}
\zeta(f,X;t)=\exp\left( \sum_{n=1}^\infty \#\Per_n(f)\frac{t^n}{n} \right).
\end{equation*}
Assume that $\#\Per_n(f)<\infty$ for all $n$, as otherwise $\zeta$ is not defined. The power series $\zeta(f,X;t)$ has rational coefficients, and it is not hard to show that $\zeta(f,X;t)\in\Z[[t]]$ by means of the product formula
\begin{equation*}
\zeta(f,X;t)=\prod_{\text{cycles }C}\left(1-t^{|C|}\right)^{-1}.
\end{equation*}

This zeta function was introduced by Artin and Mazur in ~\cite{ArtinMazur}, where it is studied for $X$ a manifold and $f$ a diffeomorphism. In this setting, only the \emph{isolated} periodic points are counted. This will not be an important distinction for our purposes.

This paper continues the study of the following question, introduced in ~\cite{BridyActa} for polynomials, but just as easily phrased for rational functions.

\begin{question}
For which $f\in{\overline{\F}}_p(x)$ is $\zeta(f,\P^1(\overline{\F}_p);t)$ rational?
\end{question}

The purpose of this paper is to answer this question for rational maps that are dynamically affine. These are maps that, loosely speaking, come from endomorphisms of algebraic groups; a precise definition will be given in section ~\ref{sec: affine}. There are five families of dynamically affine maps in one dimension: power maps, Chebyshev polynomials, Latt\`es maps, additive polynomials, and subadditive polynomials. (To be precise, this classification only holds up to conjugacy by $\Aut(\P^1)$, but $\zeta$ is a conjugacy invariant.) We determine the rationality of the zeta function for each of these families, and we show that when it fails to be rational, it is transcendental.

Let $k$ be an arbitrary algebraically closed field of characteristic $p$. Our main results are the following.

\begin{thm}\label{thm: main}
Let $f\in k(x)$ be a separable power map, Chebyshev polynomial, or Latt\`es map. Then $\zeta(f,\P^1(k);t)$ is transcendental over $\Q(t)$.
\end{thm}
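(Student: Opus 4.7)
The overall plan is to deduce transcendence of $\zeta$ from transcendence of the auxiliary series
\[
F(t) = \sum_{n\geq 1}\#\Per_n(f)\, t^n = t\cdot \zeta'(t)/\zeta(t).
\]
Since algebraic series over $\Q(t)$ are closed under differentiation and ratios, $\zeta$ algebraic over $\Q(t)$ forces $F$ algebraic over $\Q(t)$, so it suffices to show that $F$ is transcendental over $\Q(t)$.

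First I would write down explicit formulas for $\#\Per_n(f)$ in each of the three cases, using the algebraic-group presentation. For a separable power map $x\mapsto x^d$ with $p\nmid d$, the fixed points of $f^n$ on $\P^1(\overline k)$ are $0$, $\infty$, and the $(d^n-1)$-th roots of unity, so $\#\Per_n(f) = 2+(d^n-1)_{p'}$, where $(m)_{p'}$ denotes the prime-to-$p$ part. A parallel count on the double cover $\G_m\to \G_m/\{\pm 1\}\cong\P^1$ yields $\#\Per_n(T_d) = 1 + \tfrac12((d^n-1)_{p'}+(d^n+1)_{p'})$ for Chebyshev. For a Latt\`es map coming from an isogeny $\phi:E\to E$, the fixed points lift to $\ker(\phi^n-1)\cup\ker(\phi^n+1)$ on $E$, yielding $\#\Per_n(f) = O(1) + \tfrac12(\deg_s(\phi^n-1)+\deg_s(\phi^n+1))$. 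In every case $\#\Per_n(f)$ is built from the prime-to-$p$ part (or separable degree) of an exponentially growing integer sequence.

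The key step is to leverage the characteristic-$p$ anomaly via Christol plus Cobham. Fix a prime $\ell$ with $\ell\neq p$ and $s:=\ord_\ell(p)\geq 2$ (only finitely many primes are excluded). Assume for contradiction that $F$ is algebraic over $\Q(t)$; then the reduction $\bar F\in \F_\ell[[t]]$ is algebraic over $\F_\ell(t)$, and by Christol's theorem the sequence $(\#\Per_n(f)\bmod \ell)$ is $\ell$-automatic. By Lifting the Exponent, $v_p(d^n-1) = v_p(d^{r_0}-1) + v_p(n/r_0)$ whenever $r_0 := \ord_p(d)$ divides $n$. Restricting to the arithmetic progression $r_0\Z_{\geq 1}$ (which preserves $\ell$-automaticity) and factoring out the exponentially periodic contribution of $d^n\bmod \ell$, one extracts the subsidiary sequence $(v_p(n)\bmod s)$, which is clearly $p$-automatic. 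But for $s\geq 2$ it is not eventually periodic, so by Cobham's theorem on multiplicatively independent bases it cannot be $\ell$-automatic. This contradicts the assumed algebraicity of $F$, and so $\zeta$ is transcendental over $\Q(t)$.

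The hardest part will be the Latt\`es case, where $\deg_s(\phi^n\pm 1)$ bifurcates by whether $E$ is ordinary or supersingular. In the ordinary case, $\phi$ acts on the height-one formal group by some $\alpha\in\Z_p^\times$ and $\deg_s(\phi^n-1) = \deg(\phi^n-1)/p^{v_p(\alpha^n-1)}$, which feeds directly into LTE. In the supersingular case one must trace the $p$-valuation through the Dieudonn\'e module of the height-two formal group, and the shape of the endomorphism ring as an order in a quaternion algebra enters the bookkeeping. The Chebyshev case also needs attention when $p=2$, since the $\{\pm 1\}$-quotient then collapses. In every sub-case the technical point to verify is that the $v_p(n)$-dependence injected into $\#\Per_n(f)$ genuinely survives modulo $\ell$ rather than cancelling.
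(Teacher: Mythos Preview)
Your proposal is correct and follows essentially the same approach as the paper: reduce to the logarithmic derivative, use the explicit periodic-point formulas together with Lifting the Exponent to isolate a term depending on $v_p(n)$, then invoke Christol and Cobham modulo a well-chosen auxiliary prime $\ell$ to derive a contradiction. One minor caveat: for Latt\`es maps the group $\Gamma$ need not be $\{\pm 1\}$ (it can be $\mu_3,\mu_4,\mu_6$, or in characteristic $2$ or $3$ a nonabelian group of order $12$ or $24$), so your formula involving only $\phi^n\pm 1$ covers just the generic case; the paper treats general $\Gamma$ by the same method, and your remarks about the ordinary/supersingular bifurcation and the need to check that the $v_p(n)$-term survives mod $\ell$ are exactly the right concerns.
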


\begin{thm}\label{thm: main2}
Let $f\in k[x]$ be a separable additive or subadditive polynomial. If $f'(0)$ is algebraic over $\F_p$, then $\zeta(f,\P^1(k);t)$ is transcendental over $\Q(t)$. If $f'(0)$ is transcendental over $\F_p$, then $\zeta(f,\P^1(k);t)$ is rational.
\end{thm}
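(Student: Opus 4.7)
The plan is to treat the additive case directly using the twisted polynomial ring $k\{F\}$ and then reduce the subadditive case via a quotient argument. An additive polynomial $f(x) = \sum_{i=0}^d a_i x^{p^i}$ corresponds to $\sum a_i F^i \in k\{F\}$ (with $F\alpha = \alpha^p F$), where composition becomes multiplication. A general fact: any $h \in k\{F\}$ with top $F$-degree $D$ and bottom $F$-degree $v$ has exactly $p^{D-v}$ zeros in $k$---factor $h = h_0 F^v$ with $h_0$ of nonzero constant term (hence separable of $F$-degree $D-v$), and use that Frobenius is bijective on an algebraically closed field. Applied to $h = f^n - 1$, whose coefficient of $F^0$ equals $a^n - 1$ (where $a := f'(0)$), this gives
\[
\#\Per_n(f) = p^{dn - v_n} + 1, \qquad v_n := v_F(f^n - 1),
\]
with the $+1$ accounting for the fixed point at $\infty$ and $v_n = 0$ iff $a^n \neq 1$. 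If $a$ is transcendental over $\F_p$, then $a$ is not a root of unity, so $v_n = 0$ for every $n$ and $\zeta(f,\P^1(k);t) = \frac{1}{(1-t)(1-p^d t)} \in \Q(t)$.

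If instead $a$ is algebraic over $\F_p$, then $a$ is a root of unity of some order $m \geq 1$; one has $v_n = 0$ for $m \nmid n$ but $v_n > 0$ for $m \mid n$, with an intricate $p$-adic dependence on $n$ (for instance $f = 1 + F$ gives $v_n = p^{v_p(n)}$ via Lucas's congruence applied to $(1 + F)^n$). To prove transcendence, I would split $\log\zeta = \sum (p^{dn - v_n} + 1) t^n/n$ into pieces indexed by $v_p(n/m)$ and apply the identity $\sum_{\gcd(j,p)=1} u^j/j = -\log(1-u) + \tfrac{1}{p}\log(1-u^p)$ to each piece. Telescoping rewrites
\[
\zeta(f, \P^1(k); t) = R(t) \cdot \prod_{k \geq 0}\bigl(1 - \alpha_k t^{n_k}\bigr)^{c_k}
\]
with $R \in \Q(t)$, $n_k \to \infty$, and rational exponents $c_k$ whose partial tail sums $\sum_{j \geq k} c_j$ are positive for $k \geq 1$ and tend to $0$ as $k \to \infty$. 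Each factor with $c_k \notin \Z$ contributes branch points at the $n_k$-th roots of $\alpha_k^{-1}$, all of which lie on a single circle in $\C$ and accumulate densely on it. At a branch point first appearing at some sufficiently large level $k$, the net exponent of $\zeta$ there is $\sum_{j \geq k} c_j \in (0,1)$, hence not an integer; so $\zeta$ has infinitely many genuine branch points and cannot be algebraic over $\Q(t)$.

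For the subadditive case, a subadditive polynomial $\bar f$ is the descent of an additive $g$ along a quotient $\pi : \G_a \to \G_a/\Gamma \cong \G_a$, $\pi(x) = x^{|\Gamma|}$, for a finite subgroup $\Gamma \subset \Aut(\G_a) = \G_m$ of order coprime to $p$. A point $y = \pi(x)$ is fixed by $\bar f^n$ iff $g^n(x) = \gamma x$ for some $\gamma \in \Gamma$, and after accounting for $\Gamma$-orbits
\[
\#\Per_n(\bar f) = 1 + \frac{1}{|\Gamma|}\sum_{\gamma \in \Gamma}\#\{x : g^n(x) = \gamma x\};
\]
each summand is the zero count of the additive polynomial $g^n - \gamma$, equal to $p^{dn}$ precisely when $a^n \neq \gamma$, where now $a := g'(0)$. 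Since $\bar f'(0) = g'(0)^{|\Gamma|}$ has the same algebraic/transcendental status over $\F_p$ as $a$, the dichotomy matches: when $a$ is transcendental, $a^n \notin \Gamma \subset \bar\F_p^\times$ for any $n$ and the same rational formula for $\zeta$ holds; when $a$ is algebraic, the same telescoping and dense-branch-points argument yields transcendence. The main obstacle is the transcendence step above: verifying that, after all cancellations among the overlapping factors of the infinite product, infinitely many branch points survive with non-integer net exponent. The essential input is the $p$-adic (equivalently, $p$-automatic) structure of $v_n$, which provides the growing hierarchy $n_k \to \infty$ together with the rapid tail decay of $\sum_{j \geq k} c_j$.
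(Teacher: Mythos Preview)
Your rational case ($f'(0)$ transcendental) is correct and identical to the paper's: the constant term $a^n-\omega$ of $\sigma^n-\omega$ never vanishes, every $v_\phi$ is zero, and $\#\Per_n(f)=1+(\deg\sigma)^n$ gives $\zeta=1/((1-t)(1-(\deg\sigma)t))$.

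For the transcendence case your route diverges sharply from the paper's. The paper argues by contradiction via automatic sequences: if $\zeta$ were algebraic, Christol's theorem would make $(\#\Per_n(f)\bmod\ell)$ an $\ell$-automatic sequence for every prime $\ell$. Restricting to the arithmetic progression $n\mapsto mn$ (along which only $\gamma=1$ contributes nontrivially, since $1-\omega\notin(\phi)$ for $\omega\ne1$) and using the lifting identity $v_\phi(\sigma^{mn}-1)=Vp^{v_p(n)}$, the paper isolates the sequence $n\mapsto p^{Vp^{v_p(n)}}\bmod\ell$. A Cobham-type result (Proposition~\ref{prop: v_p automaticity G_a}) shows this sequence is $p$-automatic but not $\ell$-automatic for suitably chosen $\ell$, yielding the contradiction. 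No complex analysis enters.

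Your infinite-product expansion is correct, with $c_k=(p^{-Vp^{k-1}}-p^{-Vp^k})/(mp^k)$ and tail sums $C_K=\sum_{j\ge K}c_j\in(0,1)$, but the step from ``net exponent $C_K\notin\Z$'' to ``$\zeta$ has a genuine branch point at $t_0$'' is the real gap, and it is not the cancellation issue you flag. All of your candidate singularities lie on the circle $|t|=p^{-d}$, which is precisely the boundary of convergence of the product; the product representation alone says nothing about the behavior of $\zeta$ \emph{at} such $t_0$. What is actually needed is a radial asymptotic $|\zeta(rt_0)|\asymp(1-r)^{C_K}$ as $r\to1^-$, which requires uniform control of the infinite tail $\prod_{k\ge K}(1-r^{mp^k})^{c_k}$; this is obtainable from the super-exponential decay of the $c_k$, but it is a genuine analytic estimate you have not supplied. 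Only with that in hand does holomorphy of an algebraic $\zeta$ at $t_0$ (away from its finitely many ramification points) force $C_K\in\Z_{\ge0}$ and give the contradiction. Your subadditive reduction is also too quick: for $\gamma\ne1$ lying in $\langle a\rangle$ one has $v_\phi(\sigma^n-\gamma)>0$ along a residue class mod $m$, and you must check these valuations are eventually bounded (so contribute only rational factors) rather than introduce competing singularities on the same circle. The paper sidesteps both issues by passing to the subsequence $mn$ and working mod $\ell$, trading boundary analysis for the combinatorics of Cobham's theorem.
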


These theorems can be seen as the broadest possible generalization of the work in ~\cite{BridyActa}, as the maps considered there are very specific cases of one-dimensional dynamically affine maps. In order to handle the new cases, we need to study the arithmetic of the endomorphism rings of one-dimensional algebraic groups, which can be somewhat complicated in the case of an elliptic curve.

Inseparable maps are excluded from the above theorems because their zeta functions are trivially rational. Let $f\in k(x)$ be inseparable and of degree $d$. The derivative of $f$ is identically zero, so $f^n(x)-x$ has distinct roots for every $n$ and $\#\Per_n(f)=d^n+1$. Therefore
\begin{equation*}
\zeta(f,\P^1(k);t)=\exp\left( \sum_{n=1}^\infty\frac{(d^n+1)t^n}{n}\right) = \frac{1}{(1-t)(1-dt)}.
\end{equation*}
For a general $f\in k(x)$, it is not always the case that $\#\Per_n(f)=d^n+1$, but it is certainly true that $\#\Per_n(f)\leq d^n+1$. Therefore if we consider $\zeta(f,\P^1(k);t)$ as a function of a complex variable, it converges to a holomorphic function in a positive radius around the origin.

\begin{rem}
In higher dimensions, the formula $\deg(f^n)=(\deg f)^n$ is not necessarily true. This complicates the above calculation of rationality. See, for example, ~\cite{Bellon1999}, ~\cite{Hasselblatt2007}, and ~\cite{SilvermanEntropy} for a discussion of this phenomenon.
\end{rem}

\begin{rem}
In characteristic zero, the situation is very different. Hinkkanen shows that every $f\in\C(x)$ has a rational zeta function ~\cite{Hinkkanen}. The proof relies on the fact that there are only finitely many $x\in\P^1(\C)$ such that $(f^n)'(x)$ is a root of unity. This argument fails catastrophically in positive characteristic because every element of $\overline{\F}_p$ is a root of unity. Nevertheless, it is peculiar that the conclusion of Hinkkanen's theorem holds in our setting almost exclusively when $f$ is inseparable, which is a phenomenon that cannot occur in characteristic 0.
\end{rem}

The rest of the paper will prove Theorems ~\ref{thm: main} and ~\ref{thm: main2}. In Section ~\ref{sec: affine} we define dynamically affine maps, and in Sections ~\ref{sec: G_m}, ~\ref{sec: G_a}, and ~\ref{sec: Lattes} we classify all dynamically affine maps of $\P^1$ and establish some facts about their periodic points. A crucial and interesting feature of these maps is that we can count their periodic points by studying the arithmetic of certain endomorphism rings. Section ~\ref{sec: lifting the exponent} provides some algebraic lemmas that are useful in this direction. Our proof also employs the theory of sequences generated by finite automata. Section \ref{sec: Automatic Sequences} sketches the necessary background in this area. Sections \ref{sec: Proof 1.2} and \ref{sec: Proof 1.3} finish the proof of Theorems ~\ref{thm: main} and ~\ref{thm: main2}.

The results of this paper suggest the following conjecture.
\begin{conjecture}\label{conj: separable}
If $f\in \overline{\F}_p(x)$ is separable, $\zeta(f,\P^1(\overline{\F}_p);t)$ is transcendental over $\Q(t)$.
\end{conjecture}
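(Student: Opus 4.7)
The plan is to localize the obstruction to rationality in the ``multiplicity defect'' at fixed points of iterates. For separable $f\in\overline{\F}_p(x)$ of degree $d$, the intersection number of the graph of $f^n$ with the diagonal in $\P^1\times\P^1$ is $d^n+1$, so
\begin{equation*}
\#\Per_n(f)=d^n+1-R_n(f),\qquad R_n(f)=\sum_{x\in\Fix(f^n)}\bigl(m_x(f^n)-1\bigr),
\end{equation*}
where $m_x(f^n)$ is the local intersection multiplicity at $x$. The factor $\exp(\sum_{n\ge 1}(d^n+1)t^n/n)=\frac{1}{(1-t)(1-dt)}$ is trivially rational, so $\zeta(f,\P^1(\overline{\F}_p);t)$ is rational over $\Q$ if and only if the generating function $\exp\bigl(\sum_{n\ge 1} R_n(f)\,t^n/n\bigr)$ is rational, equivalently $R_n(f)$ is eventually a $\Z$-linear combination of $n$-th powers of fixed algebraic numbers.

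Next I would stratify $R_n(f)$ by periodic orbit. Each $x\in\Fix(f^n)$ has exact period $m\mid n$ and multiplier $\lambda_x=(f^m)'(x)\in\overline{\F}_p$. Outside the orbits of the critical points, $\lambda_x$ is nonzero and hence a root of unity of some order $e_x$ coprime to $p$. The local formal model of $f^m$ at $x$ then makes $m_x(f^{mk})-1$ grow in $k$ along a specific $p$-adic pattern whenever $e_x\mid k$, and vanish otherwise. Grouping by orbit and applying M\"obius inversion,
\begin{equation*}
R_n(f)=\sum_{m\mid n}\ \sum_{\text{orbits }\mathcal{O}\text{ of length }m}\delta_{\mathcal{O}}(n/m),
\end{equation*}
with $\delta_{\mathcal{O}}(k)$ supported on multiples of $e_{\mathcal{O}}$ and determined by a finite local invariant. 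This is formally the same shape of sum that drives the dynamically affine cases handled in Theorems \ref{thm: main} and \ref{thm: main2}.

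Third, I would try to adapt the automata-theoretic argument of Sections \ref{sec: lifting the exponent}--\ref{sec: Automatic Sequences} to this more general setting. The aim is to produce, for a suitable auxiliary prime $\ell\ne p$, a $p$-automatic description of $R_n(f)\bmod\ell$ as $n$ varies, and then to invoke the Christol--Kamae--Mend\`es-France--Rauzy theorem together with Cobham's theorem to conclude that a rational $\zeta$ would force $R_n(f)\bmod\ell$ to be eventually periodic. One would then derive a contradiction from the actual orbit structure of $f$.

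The \textbf{main obstacle} is precisely the lack of algebraic-group input. In the dynamically affine case the multipliers $\lambda_x$ all arise from a single endomorphism of a one-dimensional group, and the orders $e_x$ are governed by a fixed arithmetic rule (the splitting behaviour of primes in an order of an imaginary quadratic field, or the action of a power map on $(\Z/n\Z)^{\times}$). For an arbitrary separable $f$ there is no such source: the multipliers on distinct periodic orbits can be essentially arbitrary torsion elements of $\overline{\F}_p^{\times}$, and there is no a priori finite-state device generating the sequence $(e_x)$. A plausible intermediate target is the post-critically finite case, where the ramification of $f^n$ is rigid enough to force the multiplier spectrum into a fixed finite extension of $\F_p$; even there, converting rigidity into automaticity of $(R_n(f)\bmod\ell)_n$ appears to be the essential new input needed to settle the conjecture in full generality.
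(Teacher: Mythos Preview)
The statement is Conjecture~\ref{conj: separable}, which the paper leaves \emph{open}; there is no proof in the paper to compare your proposal against. The paper's own commentary immediately after the conjecture makes exactly the point you arrive at: for $f$ not dynamically affine, $\#\Per_n(f)$ ``can vary wildly,'' and already $f(x)=x^2+1$ is out of reach.

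Your decomposition $\#\Per_n(f)=d^n+1-R_n(f)$ via the intersection of the graph of $f^n$ with the diagonal is correct and is the natural first step, and the orbit-by-orbit stratification by multiplier is the right bookkeeping. But the proposal is, as you yourself acknowledge, not a proof: the step that would actually be needed---showing that $(R_n(f)\bmod\ell)_n$ is $p$-automatic for a well-chosen $\ell$, or otherwise forcing enough structure on $R_n$ to contradict eventual periodicity---is exactly what is missing, and nothing in the outline supplies it. In the dynamically affine cases treated in Sections~\ref{sec: Proof 1.2}--\ref{sec: Proof 1.3}, every multiplier comes from a \emph{single} element $\sigma$ of a fixed endomorphism ring, so the orders $e_x$ and the local multiplicity growth are governed by one arithmetic object (Propositions~\ref{prop: inseparable degree ordinary}--\ref{prop: inseparable degree supersingular} together with the lifting-the-exponent lemmas). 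For a general separable $f$ the multipliers on different periodic orbits are unrelated torsion elements of $\overline{\F}_p^\times$, and even the growth of $m_x(f^{mk})$ at a parabolic point depends on higher-order jet data of $f^m$ at $x$, not on a ``finite local invariant'' in any usable sense. Your suggested intermediate target (post-critically finite $f$) is a reasonable place to probe, but note that PCF alone does not obviously force all multipliers into a fixed finite subfield of $\overline{\F}_p$ or yield any finite-state description of $R_n$; establishing even that would already be new.
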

If $f$ is not dynamically affine, the size of $\Per_n(f)$ can vary wildly as $n$ increases, making it difficult to determine the algebraic structure of the zeta function. Even the low degree map $f(x)=x^2+1$ behaves very irregularly in its periodic point counts (for $p\notin\{2,3\})$, so the nature of $\zeta$ is unclear. However, note that by Theorem ~\ref{thm: main2} the above conjecture is false if we replace $\overline{\F}_p$ by an algebraically closed field $k$ that is transcendental over $\F_p$.

\section{Overview of Dynamically Affine Maps}\label{sec: affine}

The following definitions are taken from ~\cite[Ch 6.8]{SilvermanADS}.
\begin{defin}
Let $G$ be a commutative algebraic group. An \emph{affine morphism} of $G$ is a map $\psi:G\to G$ that can be written as a composition of a finite endomorphism of degree at least 2 and a translation.
\end{defin}
\begin{defin}
Let $V$ be a variety. A morphism $f:V\to V$ is \emph{dynamically affine} if there exist a connected commutative algebraic group $G$, an affine morphism $\psi:G\to G$, a finite subgroup $\Gamma\subseteq\Aut(G)$ and a morphism $\pi:G\to G/\Gamma$, and a morphism that identifies $G/\Gamma$ with a Zariski dense open subset of $V$ such that the following diagram commutes:
\begin{equation}\label{eqn: dynamically affine}
\xymatrix{
 G \ar[r]^\psi\ar[d]^\pi & G \ar[d]^\pi \\
G/\Gamma \ar[r]\ar[d] & G/\Gamma \ar[d] \\
V \ar[r]^f & V
}
\end{equation}
\end{defin}

It is well known that the only dynamically affine maps of $\P^1(\C)$ are power maps, Chebyshev polynomials, and Latt\`es maps (up to conjugacy by fractional linear transformations) ~\cite[p. 378]{SilvermanADS}. These arise when $G$ is either the multiplicative group $\G_m$ or an elliptic curve.

In characteristic $p$, there are two additional families of dynamically affine maps, both of which arise from the additive group $\G_a$. These are additive polynomials, which are maps such as $f(x)=x^p-x$ that distribute over addition, and subadditive polynomials such as $f(x)=x(x-1)^{p-1}$, which arise as the maps induced by additive polynomials on the quotient of $\G_a$ by a group of roots of unity.

We elaborate on these families in the sections that follow. The only one-dimensional algebraic groups are $\G_m$, $\G_a$, and elliptic curves. By considering all of the possibilities for the group $\Gamma$, we show that the maps listed above are all of the dynamically affine maps of $\P^1$. First, however, we establish a lemma that counts $\Per_n(f)$ in terms of the kernels of endomorphisms of $G$.

\begin{lem}\label{lem: periodic count}
Let $f:V\to V$ be dynamically affine. Assume that the affine morphism $\psi:G\to G$ is surjective. Write $\psi$ as $\psi(g)=\sigma(g)+h$, where $\sigma\in\End(G)$, $h\in G$ and the group law of $G$ is written additively. Then
\begin{equation*}
\#\Per_n(f)=\#(\Per_n(f) \setminus (G/\Gamma)) + \frac{1}{|\Gamma|}\sum_{\gamma\in\Gamma}\#\ker(\sigma^n-\gamma).
\end{equation*}
\end{lem}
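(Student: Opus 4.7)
The plan is to split $\Per_n(f)$ into points lying in the dense open set $G/\Gamma\subset V$ and points in its complement; the first summand of the asserted formula counts the latter by definition, so it suffices to prove
\[
\#(\Per_n(f)\cap (G/\Gamma))=\frac{1}{|\Gamma|}\sum_{\gamma\in\Gamma}\#\ker(\sigma^n-\gamma).
\]
Writing $G$ additively, iterating $\psi(g)=\sigma(g)+h$ gives $\psi^n(g)=\sigma^n(g)+h_n$, where $h_n=\sum_{i=0}^{n-1}\sigma^i(h)$. Since $\pi^{-1}(\pi(g))=\Gamma\cdot g$, a point $\pi(g)$ lies in $\Per_n(f)$ iff $\psi^n(g)=\gamma(g)$ for some $\gamma\in\Gamma$, equivalently $(\sigma^n-\gamma)(g)=-h_n$. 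Let $S_\gamma\subseteq G$ denote this solution set and set $T_n=\bigcup_\gamma S_\gamma=\pi^{-1}(\Per_n(f)\cap G/\Gamma)$.

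Next I would double count the pairs $\{(g,\gamma)\in G\times\Gamma : \psi^n(g)=\gamma(g)\}$. Summing over $\gamma$ gives $\sum_\gamma |S_\gamma|$. For a fixed $g\in T_n$ with a particular witness $\gamma_0$, every other solution $\gamma$ satisfies $\gamma_0^{-1}\gamma\in\mathrm{Stab}_\Gamma(g)$, so the solutions form the coset $\gamma_0\cdot\mathrm{Stab}_\Gamma(g)$ and $g$ contributes $|\mathrm{Stab}_\Gamma(g)|$. Grouping by $\Gamma$-orbit and applying orbit-stabilizer,
\[
\sum_{g\in T_n}|\mathrm{Stab}_\Gamma(g)|=|\Gamma|\cdot\#(\Gamma\text{-orbits in }T_n)=|\Gamma|\cdot\#(\Per_n(f)\cap G/\Gamma).
\]
Thus $\sum_\gamma |S_\gamma|=|\Gamma|\cdot\#(\Per_n(f)\cap G/\Gamma)$.

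It then remains to identify $|S_\gamma|$ with $\#\ker(\sigma^n-\gamma)$. Since $\psi$ has degree at least $2$, so does $\sigma$, and $\sigma^n$ has degree at least $2^n>1=\deg\gamma$; thus $\sigma^n-\gamma$ is a nonzero endomorphism of the connected one-dimensional algebraic group $G$. Any such endomorphism is surjective with finite kernel, since its image is a closed connected subgroup of the same dimension as $G$. Hence $-h_n$ lies in the image of $\sigma^n-\gamma$, making $S_\gamma$ a coset of $\ker(\sigma^n-\gamma)$ with $|S_\gamma|=\#\ker(\sigma^n-\gamma)$. Combining this with the double count above yields the lemma.

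The main subtlety is the surjectivity of $\sigma^n-\gamma$: it requires the degree comparison to rule out $\sigma^n=\gamma$, together with the one-dimensionality of $G$ to upgrade nonvanishing to surjectivity. Once this is in place, the rest of the argument is a standard orbit-counting identity on the fibers of $\pi$.
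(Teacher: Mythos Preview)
Your argument is correct and follows the same orbit-counting strategy as the paper: identify $\Per_n(f)\cap(G/\Gamma)$ with the $\Gamma$-orbits in $T_n=\bigcup_\gamma S_\gamma$, double count the pairs $(g,\gamma)$ with $\psi^n(g)=\gamma(g)$ via orbit--stabilizer, and then identify each $|S_\gamma|$ with $\#\ker(\sigma^n-\gamma)$ by surjectivity of $\sigma^n-\gamma$. Your justification of this last step (degree comparison to get $\sigma^n-\gamma\neq 0$, then one-dimensionality of $G$) is a bit more explicit than the paper's, which simply appeals to the assumed surjectivity of $\psi$; both arrive at the same conclusion.
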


\begin{proof}
Recall that $G/\Gamma$ is identified with a Zariski open subset of $V$. The equation above claims that the $n$-periodic points that lie in this set are counted by the formula $\frac{1}{|\Gamma|}\sum_{\gamma\in\Gamma}\#\ker(\sigma^n-\gamma)$.

Suppose that $z\in\Per_n(f)\cap G/\Gamma$. By diagram ~\ref{eqn: dynamically affine}, there exists $g\in \pi^{-1}(z)$ and $\gamma\in\Gamma$ such that $\psi^n(g)=\gamma(g)$, and every such choice of $g$ and $\gamma$ gives some $z\in\Per_n(f)\cap G/\Gamma$. Therefore
\begin{equation*}\label{eqn: Fix Count}
\Per_n(f)\cap G/\Gamma =\pi(\{g\in G:\psi^n(g)=\gamma(g)\text{ for some }\gamma\in\Gamma\}).
\end{equation*}

By slight abuse of notation, let $\ker(\psi^n-\gamma)=(\psi^n-\gamma)^{-1}(0)$. Define 
\begin{equation*}
S=\bigcup_{\gamma\in\Gamma}\ker(\psi^n-\gamma),
\end{equation*}
so that $\Per_n(f)\cap G/\Gamma=\pi(S)$. We claim that $\Gamma$ acts on $S$. 

Let $g\in S$, so that $\psi^n(g)=\delta(g)$ for some $\delta\in\Gamma$. Let $\gamma\in\Gamma$. Observe that
\begin{equation*}
\pi(\psi^n(\gamma(g))) = f^n(\pi(\gamma(g)))=f^n(\pi(g))=\pi(\psi^n(g)).
\end{equation*}
As $\psi^n(\gamma(g))$ and $\psi^n(g)$ have the same image under $\pi$, there exists some $\delta'\in\Gamma$ such that $\psi^n(\gamma(g))=\delta'(g)$, and therefore $\gamma(g)\in S$. 
(Somewhat surprisingly, $\delta'$ depends only on $\gamma$ and not on $g$, but we do not need this for our purposes. See ~\cite[Prop 6.77]{SilvermanADS}.)

If $z\in\Per_n(f)\cap G/\Gamma$, the set $\pi^{-1}(z)$ is a $\Gamma$-orbit in $S$, so there is a bijection between $\Per_n(f)\cap G/\Gamma$ and the set of orbits $S/\Gamma$. Let $\Gamma_g$ be the subgroup of $\Gamma$ that fixes $g\in S$, and let $\delta$ be such that $\psi^n(g)=\delta(g)$. Then
\begin{equation*}
\#\{\gamma\in\Gamma: g\in\ker(\psi^n-\gamma)\} = \#\{\gamma\in\Gamma: g =\gamma^{-1}\delta(g)\}= |\Gamma_g|
\end{equation*}
By the orbit-stabilizer theorem ~\cite[Cor 4.10]{Isaacs},
\begin{align*}
\#S/\Gamma & = \sum_{g\in S}\frac{|\Gamma_g|}{|\Gamma|} = \frac{1}{|\Gamma|}\sum_{g\in S}\#\{\gamma\in\Gamma: g\in\ker(\psi^n-\gamma)\}\\
& = \frac{1}{|\Gamma|}\sum_{\gamma\in\Gamma}\#\{g\in S: g\in\ker(\psi^n-\gamma)\}=\frac{1}{|\Gamma|}\sum_{\gamma\in\Gamma}\#\ker(\psi^n-\gamma).
\end{align*}
Recall that $\psi(g)=\sigma(g)+h$. So $\psi^n(g)=\sigma^n(g)+h_n$ for some $h_n\in G$, and
\begin{equation*}
\ker(\psi^n-\gamma)=(\psi^n-\gamma)^{-1}(0)=(\sigma^n-\gamma)^{-1}(-h_n).
\end{equation*}
We assumed $\psi$ is surjective, so $\ker(\psi^n-\gamma)$ is nonempty. Therefore 
\begin{equation*}
\#(\sigma^n-\gamma)^{-1}(-h_n)=\#\ker(\sigma^n-\gamma),
\end{equation*}
completing the proof.
\end{proof}

\section{Maps from $\G_m$: Power Maps and Chebyshev Polynomials}\label{sec: G_m}
Let $\G_m$ be the multiplicative group. The endomorphism ring $\End(\G_m)$ is isomorphic to $\Z$, where the integer $d$ corresponds to the power map $x\mapsto x^d$. So every affine morphism $\psi:\G_m\to\G_m$ has the form $\psi(x)=ax^d$. The automorphism group is $\Aut(\G_m)\cong\Z^\times=\{\pm 1\}$. There are only two subgroups $\Gamma\subseteq\Aut(E)$: either $\Gamma$ is trivial or $\Gamma=\{x,x^{-1}\}$.

The underlying scheme of $\G_m$ is $\Spec k[x,x^{-1}]\cong\A^1\setminus\{0\}$, which is Zariski open in $\P^1$. If $\Gamma$ is trivial, then a power map arises from the following commutative diagram:
\[
\xymatrix{
 \G_m \ar[r]^{x\mapsto ax^d}\ar[d]^\wr & \G_m \ar[d]^\wr \\
\A^1\setminus\{0\}\ar [d]\ar[r] & \A^1\setminus\{0\}\ar[d]\\
\P^1 \ar[r]^f & \P^1
}
\]
There are many choices for the inclusion map $\A^1\setminus\{0\}\hookrightarrow\P^1$. The most obvious is the map that extends to the identity map on $\P^1$, in which case $f$ is the ``affine power map" $f(x)=ax^d$. Other choices give such maps conjugated by fractional linear transformations. Over an algebraically closed field, we can always conjugate by a linear polynomial $x\mapsto cx$ in order to make $f$ monic, so we may assume $f(x)=x^d$. (Recall that $\zeta$ is a conjugacy invariant.)

There are only two points in $\P^1$ that lie outside $\A^1\setminus\{0\}$, namely, 0 and $\infty$. If $d>0$, then $f$ fixes these two points, and if $d<0$, then $f$ swaps them. The group law of $\G_m$ is written multiplicatively, so if $d>0$, Lemma ~\ref{lem: periodic count} gives
\begin{equation}
\#\Per_n(f) = 2 + \#\ker(x^{d^n-1}),
\end{equation}
and if $d<0$, then
\begin{equation}
\#\Per_n(f) = \left\{
     \begin{array}{ll}
       2+\#\ker(x^{d^n-1}) & : d\text{ even}\\
       \#\ker(x^{d^n-1}) & : d\text{ odd}
     \end{array}
   \right.
\end{equation}

If we let $\Gamma=\{x,x^{-1}\}$, then $\G_m/\Gamma\cong\A^1$, and the quotient can be realized by the map $\pi(x)=x+x^{-1}$. There exists a polynomial $f$ such that the following diagram commutes ~\cite[Prop 6.6]{SilvermanADS}.
\[
\xymatrix{
 \G_m \ar[r]^{x\mapsto ax^d}\ar[d]^\pi & \G_m \ar[d]^\pi \\
\A^1\ar[d]\ar[r] & \A^1\ar[d]\\
\P^1 \ar[r]^f & \P^1
}
\]
If the inclusion $\A^1\hookrightarrow\P^1$ extends to the identity map and $a=1$, then $f$ is the $d$th Chebyshev polynomial $T_d(x)$ and satisfies 
\begin{equation*}
f(x+x^{-1})=x^d+x^{-d}.
\end{equation*}
As with power maps, choosing other inclusions or $a\neq 1$ simply results in fractional linear conjugates of Chebyshev polynomials. Because of the symmetry in the definition, positive and negative $d$ give rise to the same $f$. Here the count of Lemma ~\ref{lem: periodic count} is
\begin{equation}
\#\Per_n(f) = 1 + \frac{1}{2}\left(\#\ker(x^{d^n-1})+\#\ker(x^{d^n+1})\right).
\end{equation}

The kernel of the endomorphism $x\mapsto x^m$ is the set of $|m|$th roots of unity in $k$. If $(p,|m|)=1$, there are $|m|$ of these, and in general  
\begin{equation*}
\#\ker(x^m)=\frac{|m|}{p^{v_p(|m|)}}
\end{equation*}
because there are no nontrivial $p$th roots of unity.

Therefore, for a power map $f\in k(x)$ associated to the endomorphism $\sigma(x)=x^d$, if $d>0$ we have the formula
\begin{equation}\label{eqn: periodic count power map}
\#\Per_n(f) = 2 + \frac{d^n-1}{p^{v_p(d^n-1)}},
\end{equation}
and if $d<0$,
\begin{equation}\label{eqn: periodic count power map negative}
\#\Per_n(f) = \left\{
     \begin{array}{ll}
        2 + \frac{|d|^n-1}{p^{v_p(|d|^n-1)}} & : d\text{ even} \\
        \frac{|d|^n+1}{p^{v_p(|d|^n+1)}} & : d\text{ odd}
     \end{array}
   \right.
\end{equation}
For a Chebyshev polynomia $f$, the formula reads
\begin{equation}\label{eqn: periodic count Chebyshev polynomial}
\#\Per_n(f) = 1 + \frac{1}{2}\left(\frac{|d|^n-1}{p^{v_p(|d|^n-1)}}+\frac{|d|^n+1}{p^{v_p(|d|^n+1)}}\right).
\end{equation}

\section{Maps from $\G_a$: Additive and Subadditive Polynomials}\label{sec: G_a} Let $\G_a$ be the additive group. In characteristic zero, all endomorphisms of $\G_a$ are of the form $x\mapsto cx$, so $\End(\G_a)\cong k$. In positive characteristic, the Frobenius map $\phi(x)=x^p$ and its iterates are also endomorphisms, and $\End(\G_a)$ is the noncommutative polynomial ring $ k\langle\phi\rangle$ with the multiplication rule $\phi c=c^p\phi$ for $c\in k$.

The only automorphisms of $\G_a$ are the nonzero maps $x\mapsto cx$, as Frobenius is a bijection on $k$-valued points ($k$ is algebraically closed) but is not an isomorphism on the underlying scheme, which is $\A^1$. Therefore $\Aut(\G_a)\cong k^\times$. The finite subgroups $\Gamma\subseteq\Aut(\G_a)$ are all cyclic by a basic fact of field theory ~\cite[Lem 17.12]{Isaacs}, so there is some $d$ such that $\Gamma\cong\mu_d$, the group of $d$th roots of unity.

Let $\psi:\G_a\to\G_a$ be an affine morphism, that is, an element of $k\langle\phi\rangle$ composed with a translation. If $\Gamma=\{1\}$, then $\pi:\G_a\to G/\Gamma\cong\A^1$ is the identity morphism on the underlying scheme. There always exists an $f$ that fits into the following diagram:
\[
\xymatrix{
 \G_a \ar[r]^{\psi}\ar[d]^\pi & \G_a \ar[d]^\pi \\
\A^1\ar[d]\ar[r] & \A^1\ar[d]\\
\P^1 \ar[r]^f & \P^1
}
\]
There are many inclusions $\A^1\hookrightarrow\P^1$, but as with power maps, $f$ is determined up to conjugacy. Therefore we may assume that the inclusion extends to the identity, in which case $f$ fixes $\infty$ and is a polynomial. We call $f$ an additive polynomial, as $f(x+y)=f(x)+f(y)$ for all $x,y\in k$. 

If $\Gamma\cong\mu_d$ for $d>1$ and $(p,d)=1$, then the map $\pi:\G_a\to\G_a/\mu_d\cong \A^1$ can be taken to be $\pi(x)=x^d$. In this case there exists an $f$ to make the diagram commute if and only if $\psi$ satisfies $\psi(\omega_d x) = \omega_d\psi(x)$ for a primitive $d$th root of unity $\omega_d$. (This happens if and only if $\psi(x)$, written as a polynomial, has terms whose degrees are all 1 mod $d$.) If there is such an $f$, we call it a subadditive polynomial.

Let $\sigma\in k\langle\phi\rangle$ be an endomorphism of $\G_a$. The size of $\ker\sigma$ depends on the divisibility of $\sigma$ by $\phi$, that is,
\begin{equation*}
\#\ker\sigma = \frac{\deg\sigma}{p^{v_\phi(\sigma)}}.
\end{equation*}
Here $v_\phi(\sigma)$ denotes the largest power of the two-sided maximal ideal $(\phi)=\phi k\langle\phi\rangle$ that contains $\sigma$.

Let $\psi(x)=\sigma(x)+c$ for some $c\in\G_a$. For an additive or subadditive polynomial $f$, Lemma ~\ref{lem: periodic count} and the above observation yield
\begin{equation}\label{eqn: periodic count additive polynomial}
\#\Per_n(f) = 1 + \frac{1}{d}\sum_{\omega\in\mu_d}\#\ker(\sigma^n-\omega) = 1 + \frac{1}{d}\sum_{\omega\in\mu_d}\frac{(\deg\sigma)^n}{p^{v_\phi(\sigma^n-\omega)}}.
\end{equation}
Note that $\deg(\sigma^n-\omega)=\deg(\sigma^n)$ because $\omega:\G_a\to\G_a$ is the linear polynomial $\omega(x)=\omega x$, and $\deg\sigma=\deg\psi\geq 2$ by assumption.

\section{Maps from Elliptic Curves: Latt\`es Maps}\label{sec: Lattes} Let $E$ be an elliptic curve and let $\psi:E\to E$ be an affine morphism. The endomorphism ring $\End(E)$ can be identified with either $\Z$, an order in an imaginary quadratic field, or a maximal order in a quaternion algebra ~\cite[Thm V.3.1]{SilvermanEllipticCurves}. There are only six possibilities for $\Aut(E)$: it may be a cyclic group of order 2,3,4, or 6, or a certain nonabelian group of order 12 or 24 ~\cite[Thm III.10.1]{SilvermanEllipticCurves}. Let $\Gamma$ be a nontrivial subgroup of $\Aut(E)$. We say that $f$ is a Latt\`es map if the diagram commutes:
\[
\xymatrix{
E \ar[r]^{\psi}\ar[d]^\pi & E \ar[d]^\pi \\
E/\Gamma\ar[d]^\wr\ar[r] & E/\Gamma\ar[d]^\wr\\
\P^1 \ar[r]^f & \P^1
}
\]
As $E$ is projective, the curve $E/\Gamma$ is isomorphic to $\P^1$, unlike in the cases coming from $\G_m$ and $\G_a$. For a given choice of $\psi$ and $\Gamma$, there is not necessarily an $f$ that makes the diagram commute.

\begin{rem}
A Latt\`es map is often defined to be $f\in k(x)$ such that there exists a morphism $\psi:E\to E$ with $\deg\psi\geq 2$ and a finite separable cover $\pi:E\to\P^1$ such that the following diagram commutes. 
\[
\xymatrix{
E \ar[r]^{\psi}\ar[d]^\pi & E \ar[d]^\pi \\
\P^1 \ar[r]^f & \P^1
}
\]
This is equivalent to our definition. Any self-morphism of an elliptic curve can be written as the composition of an isogeny and a translation ~\cite[p 75]{SilvermanEllipticCurves}, so any morphism $\psi:E\to E$ with $\deg\psi\geq 2$ is affine. Also, if there exists a diagram as above, then there exists such a diagram with the same $f$ and a possibly different triple $(E',\psi',\pi')$ where the map $\pi'$ is the quotient of $E'$ by a subgroup of automorphisms. This result is due to Milnor over $\C$ ~\cite{MilnorLattes}, and Ghioca and Zieve in arbitrary characteristic ~\cite{GhiocaZieve}. For a sketch of the Ghioca-Zieve proof, see ~\cite[pp. 54-56]{SilvermanModuliSpaces}.
\end{rem}

By a general fact about morphisms of elliptic curves ~\cite[Thm III.4.10(a)]{SilvermanEllipticCurves},
\begin{equation*}
\#\ker\sigma = \deg_s\sigma = \frac{\deg\sigma}{\deg_i\sigma}.
\end{equation*}
Here $\deg_s$ and $\deg_i$ denote separable and inseparable degrees. In the rest of this section we develop a formula for $\deg_i(\sigma)$ in terms of the arithmetic of $\End(E)$.

First we set some notation. Let $N,\tr:\End(E)\otimes\Q\to\Q$ denote the norm and trace maps, or the reduced norm and trace in the case that $\End(E)\otimes\Q$ is a quaternion algebra. Let $\phi_m:E\to E^{(p^m)}$ be the $p^m$th power Frobenius morphism. For an isogeny $\sigma:E_1\to E_2$, write $\hat{\sigma}:E_2\to E_1$ for the dual isogeny. The $j$-invariant of $E$ is denoted by $j(E)$.

\begin{prop}\label{prop: j(E) transcendental}
Suppose $j(E)$ is transcendental over $\F_p$. Let $\sigma\in\End(E)$. Then $\sigma\in\Z$ and 
\begin{equation*}
\deg_i(\sigma)=p^{v_p(\sigma)}.
\end{equation*}
\end{prop}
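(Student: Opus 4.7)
The plan is to establish the two claims in sequence: first deduce $\End(E) = \Z$ from the transcendence of $j(E)$, then compute $\deg_i(\sigma)$ for $\sigma \in \Z$.

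For the first claim, I would argue by contrapositive using the standard classification of endomorphism rings of elliptic curves: $\End(E)$ is either $\Z$, an order in an imaginary quadratic field (the ordinary CM case), or a maximal order in a definite quaternion algebra over $\Q$ (the supersingular case, possible only in characteristic $p$). In the supersingular case, $j(E) \in \F_{p^2}$, hence is algebraic over $\F_p$. In the ordinary CM case, Deuring's canonical lifting theorem produces a CM elliptic curve in characteristic zero reducing to $E$; its $j$-invariant is an algebraic integer (generating a ring class field over an imaginary quadratic field), and $j(E)$ is its reduction modulo a prime above $p$, hence lies in $\overline{\F}_p$ and is algebraic over $\F_p$. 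Either outcome contradicts the hypothesis, so $\End(E) = \Z$.

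For the second claim, write $\sigma = [n]$ for some $n \in \Z$, and let $e = v_p(n)$ so that $n = p^e m$ with $(p,m) = 1$. Since $j(E)$ is transcendental over $\F_p$, in particular $j(E) \notin \F_{p^2}$, so $E$ is ordinary. For any ordinary elliptic curve, the multiplication-by-$p$ map factors as $[p] = \hat{\phi} \circ \phi$, where $\phi : E \to E^{(p)}$ is the relative $p$-power Frobenius (purely inseparable of degree $p$) and $\hat{\phi} : E^{(p)} \to E$ is its dual, which is separable of degree $p$ precisely because $E$ is ordinary. Thus $\deg_i([p]) = p$, while $\deg_i([m]) = 1$ since $[m]$ has degree $m^2$ coprime to $p$ and is therefore separable. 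By multiplicativity of the inseparable degree, $\deg_i(\sigma) = \deg_i([p])^e \cdot \deg_i([m]) = p^e = p^{v_p(\sigma)}$.

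The main obstacle is the first claim, specifically ruling out ordinary CM: the supersingular subcase is immediate since $\F_{p^2}$ is algebraic over $\F_p$, but eliminating ordinary CM requires genuinely nontrivial input, namely canonical lifting together with the classical fact that CM $j$-invariants in characteristic zero are algebraic. All subsequent computations are routine once $\sigma$ is known to be an integer and $E$ is known to be ordinary.
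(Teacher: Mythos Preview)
Your proof is correct and follows essentially the same route as the paper's. The only difference is that where the paper simply cites Silverman for the implication $j(E)\notin\overline{\F}_p\Rightarrow\End(E)\cong\Z$, you supply an argument via the classification of endomorphism rings together with Deuring's canonical lifting; the computation of $\deg_i([n])$ via the factorization $[p]=\hat{\phi}\circ\phi$ with $\hat{\phi}$ separable (since $E$ is ordinary) and the separability of $[m]$ for $(m,p)=1$ is identical to the paper's.
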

\begin{proof}
If $j(E)\notin\overline{\F}_p$, then $\End(E)\cong\Z$ ~\cite[p 145]{SilvermanEllipticCurves}. The multiplication by $p$ map $[p]:E\to E$ has inseparable degree $p$, because $[p]=\hat{\phi_1}\circ\phi_1$ and the dual isogeny $\hat{\phi_1}$ is separable ~\cite[Thm V.3.1]{SilvermanEllipticCurves}. The multiplication by $m$ map is separable if $(p,m)=1$. Therefore the isogeny $[m]:E\to E$ has inseparable degree equal to $p^{v_p(m)}$.
\end{proof}

For the rest of this section, suppose that $j(E)$ is algebraic over $\F_p$. Up to isomorphism, $E$ is defined over a finite field, so the ring $\End(E)$ can be identified with an order in an imaginary quadratic field if $E$ is ordinary or a maximal order in a quaternion algebra if $E$ is supersingular ~\cite[Thm V.3.1]{SilvermanEllipticCurves}.

\begin{prop}\label{prop: inseparable degree ordinary}
Let $E$ be ordinary and let $K=\End(E)\otimes\Q$. Let $\sigma\in\End(E)$. Let $\mathfrak{p}$ be the extension to $\mathcal{O}_K$ of the ideal in $\End(E)$ consisting of all inseparable isogenies. Then $\mathfrak{p}$ is prime and
\begin{equation*}
\deg_i(\sigma)=p^{v_\mathfrak{p}(\sigma)}.
\end{equation*}
\end{prop}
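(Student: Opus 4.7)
The plan is to define $w:\End(E)\setminus\{0\}\to \Z_{\ge 0}$ by $w(\sigma)=v_p(\deg_i\sigma)$, prove $w$ is a discrete valuation, and then identify its extension to $K$ with the $\mathfrak{p}$-adic valuation for the prime $\mathfrak{p}$ of the statement.

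Since $j(E)\in \overline{\F}_p$, $E$ is defined over some $\F_q$ with $q=p^n$, and the $q$-Frobenius $\pi:=\phi_n$ lies in $\End(E)$. Ordinariness means $\pi$ satisfies $\pi^2-a\pi+q=0$ with $p\nmid a$; reducing mod $p$ gives $X(X-a)$ with distinct roots, so $p\mathcal{O}_K=\mathfrak{p}_1\mathfrak{p}_2$ splits, and $\pi$ lies in exactly one of the factors, say $v_{\mathfrak{p}_1}(\pi)=n$ and $v_{\mathfrak{p}_2}(\pi)=0$. A direct check shows $\mathrm{disc}\,\Z[\pi]=a^2-4q$ is coprime to $p$ (including in characteristic $2$, where $a$ odd forces $a^2-4q\equiv 1\pmod 2$), so the conductor $[\mathcal{O}_K:\End(E)]$ is coprime to $p$; hence $\mathfrak{p}_1$ and $\mathfrak{p}_2$ restrict to distinct primes of $\End(E)$ whose extensions back recover $\mathfrak{p}_1$ and $\mathfrak{p}_2$.

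The core step is verifying that $w$ is a valuation. Multiplicativity $w(\sigma\tau)=w(\sigma)+w(\tau)$ follows from multiplicativity of inseparable degree in towers. For the ultrametric inequality, use the factorization $\sigma=\sigma_{\mathrm{sep}}\circ\phi_{w(\sigma)}$ with $\sigma_{\mathrm{sep}}$ separable: if $w(\sigma),w(\tau)\ge k$, both $\sigma$ and $\tau$ factor through the same $\phi_k:E\to E^{(p^k)}$, and their sum on $E$ can be written as $(\sigma_0+\tau_0)\circ\phi_k$, so $w(\sigma+\tau)\ge k$. Thus $w$ extends uniquely to a nontrivial discrete valuation on $K^\times$. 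Since $w([p])=v_p(p)=1$ and $w(\pi)=v_p(q)=n$ while $v_{\mathfrak{p}_2}(\pi)=0$, the extended $w$ must equal $v_{\mathfrak{p}_1}$, giving $\deg_i\sigma=p^{v_{\mathfrak{p}_1}(\sigma)}$ for every $\sigma\in\End(E)$.

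Finally, the ideal $I$ of inseparable isogenies equals $\{\sigma\in\End(E):w(\sigma)\ge 1\}=\mathfrak{p}_1\cap\End(E)$; coprimality of $\mathfrak{p}_1$ to the conductor gives $I\mathcal{O}_K=\mathfrak{p}_1$, so the $\mathfrak{p}$ of the statement coincides with $\mathfrak{p}_1$ and is prime. The main obstacle is the ultrametric inequality for $w$: it requires that two endomorphisms with $w\ge k$ share a Frobenius factor through which addition commutes, an input from the algebraic group structure that has no analogue for arbitrary inseparable maps.
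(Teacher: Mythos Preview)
Your argument is correct and arrives at the same conclusion via a somewhat different organizing principle than the paper. The paper works ideal-theoretically: it defines the chain $I_m=\{\sigma:\deg_i\sigma\ge p^m\}$, checks that $I_1$ is prime and each $I_m$ is $I_1$-primary, and then invokes the fact that ideals of an order prime to the conductor factor uniquely (Cox, Prop.~7.20) to deduce $I_m=I_1^m$, from which the formula follows after extending to $\mathcal{O}_K$. You instead package the same content valuation-theoretically: you verify directly that $w(\sigma)=v_p(\deg_i\sigma)$ is multiplicative and ultrametric (the latter via the shared Frobenius factor $\phi_k$, which is exactly the ``routine'' additivity check the paper leaves implicit), extend $w$ to $K$, and then use the $q$-Frobenius $\pi$ together with the Hasse--Weil relation $p\nmid\tr(\pi)$ to pick out which of the two primes above $p$ carries $w$.

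What each approach buys: your route makes the identification of $\mathfrak{p}$ completely explicit---it is the prime containing $\pi$, recovering in general the observation the paper records only in a remark for $E/\F_p$. It also sidesteps the primary-decomposition step, replacing it with the cleaner statement that a discrete valuation on $K$ lying over $p$ must be one of $v_{\mathfrak{p}_1},v_{\mathfrak{p}_2}$. The paper's route, by contrast, never needs to name $\pi$ and argues more uniformly in terms of the ideal structure of the order. Both proofs ultimately rest on the same two inputs: the Frobenius factorization $\sigma=\lambda\circ\phi_m$ and the fact that the conductor of $\End(E)$ in $\mathcal{O}_K$ is prime to $p$ (which you obtain by the discriminant computation for $\Z[\pi]$, and the paper cites from Deuring).
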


\begin{proof}
By ~\cite[Cor II.2.12]{SilvermanEllipticCurves} we can write $\sigma=\lambda\circ\phi_{m}$ where $p^m=\deg_i(\sigma)$ and $\lambda:E^{(p^{m})}\to E$ is separable. It follows that $\deg_i(\sigma)\geq p^m$ if and only if $\sigma$ factors through $\phi_{m}:E\to E^{(p^{m})}$. (If $\sigma=0$, we set $\deg_i(\sigma)=\infty$.)

We know that $\End(E)$ is an order in $\mathcal{O}_K$ for some imaginary quadratic field $K$, and the conductor of $\End(E)$ is prime to $p$ ~\cite{Deuring}. Let $I_m$ be the $m$th \emph{inseparable ideal} of $\End(E)$, defined as follows:
\begin{equation*}
I_m = \{\sigma\in\End(E): \deg_i(\sigma)\geq p^m\}.
\end{equation*}
It is routine to show that $I_m$ is an ideal. If $\sigma\tau\in I_m$, then 
\begin{equation*}
\deg_i(\sigma\tau)=\deg_i(\sigma)\deg_i(\tau)\geq p^m.
\end{equation*}
If $\sigma\notin I_m$ then necessarily $\deg_i(\tau)\geq p$, so $\deg_i(\tau^m)\geq p^m$ and $\tau^m\in I_m$. For $m=1$ this shows that $I_1$ is prime, and for $m>1$ that $I_m$ is $I_1$-primary.

If $\End(E)$ were a Dedekind domain, it would follow that $I_m=(I_1)^m$ for each $m>1$, but orders of Dedekind domains are not in general Dedekind domains. Instead, consider the integral extension $\mathcal{O}_K/\End(E)$ and the prime ideal $\mathfrak{p}$ of $\mathcal{O}_K$ lying over $I_1$. The multiplication by $p$ map $[p]:E\to E$ is inseparable, so $p\in I_1$, and therefore $p\mathcal{O}_K\subseteq I_1\mathcal{O}_K\subseteq\mathfrak{p}$. So $I_1\mathcal{O}_K$ is either $\mathfrak{p}$ or $p\mathcal{O}_K=\mathfrak{p}\hat{\mathfrak{p}}$ (as $p$ splits in $\mathcal{O}_K$ ~\cite{Deuring}).

This shows that the ideal $I_1\mathcal{O}_K$ is prime to the conductor of $\End(E)$. For ideals in an order that are prime to the conductor, extension to $\mathcal{O}_K$ and contraction are inverses, and unique factorization holds ~\cite[Prop 7.20]{Cox}. Therefore $I_1\mathcal{O}_K=\mathfrak{p}$ and the $I_1$-primary ideal $I_m$ equals $(I_1)^m$ for each $m>1$. It follows easily that $(I_1)^m\mathcal{O}_K=(I_1\mathcal{O}_k)^m=\mathfrak{p}^m$. We conclude that $\deg_i(\sigma)=p^{v_\mathfrak{p}(\sigma)}$.
\end{proof}

\begin{rem}
If $E$ is defined over $\F_p$, the $p$th power Frobenius morphism $\phi_1$ is an element of $\End(E)$. In this case, the ideal $\mathfrak{p}$ in Proposition ~\ref{prop: inseparable degree ordinary} is the principal ideal $\phi_1\mathcal{O}_K$, and $v_\mathfrak{p}(\sigma)$ is simply the highest power of $\phi_1$ that divides $\sigma$ in $\mathcal{O}_K$. In general, the ideal $\mathfrak{p}$ need not be principal.
\end{rem}

\begin{prop}\label{prop: inseparable degree supersingular}
Let $E$ be supersingular, so that $\End(E)$ can be identified with a maximal order $\mathcal{O}$ of the quaternion algebra $B$. There exists a two-sided maximal ideal $I$ of $\mathcal{O}$ such that
\begin{equation*}
\deg_i(\sigma)=p^{v_I(\sigma)}.
\end{equation*}
\end{prop}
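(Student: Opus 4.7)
My plan is to adapt the argument from the ordinary case, with the structure theory of commutative orders replaced by the structure theory of maximal orders in the quaternion algebra $B = \End(E) \otimes \Q$, which is ramified at $p$ and at $\infty$ because $E$ is supersingular. As in the previous proposition, I define the inseparable ideals
\[
I_m = \{\sigma \in \mathcal{O} : \deg_i(\sigma) \geq p^m\},
\]
with the convention $\deg_i(0) = \infty$. Verifying that each $I_m$ is a two-sided ideal of $\mathcal{O}$ is the first task; the nontrivial point is closure under addition. By \cite[Cor II.2.12]{SilvermanEllipticCurves} any $\sigma \in I_m$ factors as $\sigma = \lambda \circ \phi_m$ for some $\lambda: E^{(p^m)} \to E$, so if $\sigma = \lambda \circ \phi_m$ and $\tau = \mu \circ \phi_m$ both lie in $I_m$, then $\sigma + \tau = (\lambda + \mu) \circ \phi_m \in I_m$. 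Multiplicativity of $\deg_i$ handles the two-sidedness, using the identity $\phi_m \circ \rho = \rho^{(p^m)} \circ \phi_m$ to slide a Frobenius across an endomorphism on the right.

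Next I would invoke the local structure of $\mathcal{O}$ at $p$. The localization $\mathcal{O}_p = \mathcal{O} \otimes_{\Z} \Z_p$ is the unique maximal order in the quaternion division algebra $B_p = B \otimes_{\Q} \Q_p$; this $\mathcal{O}_p$ is a noncommutative discrete valuation ring with a unique nonzero two-sided ideal $\mathfrak{P}$ satisfying $\mathfrak{P}^2 = p\mathcal{O}_p$ and residue ring $\mathcal{O}_p/\mathfrak{P} \cong \F_{p^2}$, and every nonzero two-sided ideal of $\mathcal{O}_p$ is a power of $\mathfrak{P}$. Setting $I = \mathfrak{P} \cap \mathcal{O}$ produces a two-sided maximal ideal of $\mathcal{O}$ with $I^2 = p\mathcal{O}$, which is the $I$ sought in the proposition. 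Any two-sided ideal of $\mathcal{O}$ containing a power of $p$ is recovered from its extension to $\mathcal{O}_p$ and contraction, exactly as in the commutative argument via the conductor.

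Finally, to prove $\deg_i(\sigma) = p^{v_I(\sigma)}$, I would show that the two functions $v(\sigma) := v_p(\deg_i(\sigma))$ and $v_I(\sigma)$ agree on $\mathcal{O} \setminus \{0\}$. The first paragraph makes $v$ multiplicative and non-archimedean, hence $v$ extends to a discrete valuation on $B^{\times}$. For $n \in \Z$, because $[p]$ is purely inseparable of degree $p^2$ in the supersingular case, $v([n]) = v_p(n^2) = 2v_p(n)$; likewise $v_I([n]) = 2 v_p(n)$ since $I^2 = p\mathcal{O}$. The $p$-adic valuation on $\Q_p$ extends uniquely to a valuation on the central division algebra $B_p^{\times}$, so $v$ and $v_I$, both extending $2v_p$, must coincide on $B^{\times} \subseteq B_p^{\times}$. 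The chief obstacle is handling the noncommutative structure theory carefully: the assertions that $\mathcal{O}_p$ is a noncommutative DVR with $\mathfrak{P}^2 = (p)$ and that its two-sided ideals are totally ordered by divisibility are the required replacements for the Dedekind-domain and conductor arguments used in the ordinary case.
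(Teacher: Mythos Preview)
Your argument is correct and the construction of $I$ as $\mathfrak{P}\cap\mathcal{O}$ matches the paper exactly, but the route you take to the identity $\deg_i(\sigma)=p^{v_I(\sigma)}$ is genuinely different. The paper does not define the inseparable ideals $I_m$ in the supersingular case; instead it argues directly that if $\sigma=\lambda\circ\phi_m$ with $\lambda$ separable then $p\nmid\deg\lambda$ (using that $\hat{\phi}_1$ is inseparable for supersingular $E$), so $\deg_i(\sigma)=p^{v_p(N(\sigma))}$. It then matches $v_I$ to $v_p\circ N$ by a parity trick: since $E$ is defined over $\F_{p^2}$ one has $\phi_2=i\circ[p]$ for some $i\in\Aut(E)$, which handles the case $v_p(N(\sigma))$ even, and the odd case follows by squaring. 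Your approach instead packages $\sigma\mapsto v_p(\deg_i(\sigma))$ as an abstract valuation on $B^\times$ and appeals to the uniqueness of the extension of $v_p$ from $\Q_p$ to the division algebra $B_p$. This is cleaner and more uniform with the ordinary case, and it avoids the explicit use of $\phi_2$; the paper's argument is more hands-on and makes the connection to the reduced norm explicit, which is convenient downstream since the periodic-point formulas are written in terms of $N(\sigma^n-\gamma)$.

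One small point worth tightening: your final sentence invokes uniqueness of the extension to $B_p^\times$, but $v$ is a priori only defined on $B^\times$. The cleanest way to close this is to note that for any $\sigma\in B$ the subfield $\Q(\sigma)$ embeds in $B_p$, hence is non-split at $p$, so there is a unique valuation on $\Q(\sigma)$ above $p$; thus $v$ and $v_I$ agree on every $\sigma$ individually. This is implicit in what you wrote but deserves one extra line.
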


\begin{proof}
If we write $\sigma=\lambda\circ\phi_m$ where $\lambda$ is separable, then $\deg(\lambda)$ is not divisible by $p$. If it were, the map $\lambda\circ\hat{\lambda}=[N(\lambda)]=[\deg(\lambda)]:E\to E$ would factor through $[p]:E\to E$ and would be inseparable, so one of $\lambda$ or $\hat{\lambda}$ would be inseparable. If $\hat{\lambda}$ were inseparable it would factor through $\phi_1$, so $\lambda$ would factor through $\hat{\phi_1}$, which is inseparable ~\cite[Thm V.3.1]{SilvermanEllipticCurves}, contradicting the fact that $\lambda$ is separable. Moreover, $\deg\phi_m$ is a $p$-power, and is therefore the largest power of $p$ that divides $\deg\sigma$. This shows that
\begin{equation*}
\deg_i\sigma=\deg\phi_m=p^{v_p(\deg\sigma)}=p^{v_p(N(\sigma))}.
\end{equation*}

We have $\End(E)\cong\mathcal{O}$, which is a maximal order of $B$, the unique quaternion algebra over $\Q$ ramified exactly at $p$ and $\infty$ ~\cite{Deuring}. Consider the localization $\mathcal{O}_p=\mathcal{O}\otimes\Z_p$, which is the unique maximal order of $B_p=B\otimes_\Q \Q_p$, and the inclusion $\mathcal{O}\hookrightarrow\mathcal{O}_p$. 

In $\mathcal{O}_p$ there is a uniformizing element $\pi$ such that $\pi\mathcal{O}_p$ is the unique two-sided maximal ideal of $\mathcal{O}_p$, and every ideal of $\mathcal{O}_p$ is a power of $\pi\mathcal{O}_p$ ~\cite[Ch 2, Thm 1.3]{Vigneras}. In particular, $p\mathcal{O}_p=\pi^2\mathcal{O}_p$, so $v_{\pi\mathcal{O}_p}(p)=2$. Let $I=\pi\mathcal{O}_p\cap\mathcal{O}$. The ideal $I$ is maximal in $\mathcal{O}$ because locally it is either maximal or the unit ideal: $I_p=\pi\mathcal{O}_p$,  and $I_\ell=\mathcal{O}_\ell$ for $\ell\neq p$ (this is because $p$, which lies in $I$, is invertible in $\mathcal{O}_\ell$). The ideal $I$ is also two-sided because it is two-sided locally ~\cite[p. 84]{Vigneras}. Therefore $v_I$ is a valuation on $\mathcal{O}$.

Any supersingular $E$ is defined over $\F_{p^2}$, and there exists an automorphism $i:E\to E$ such that $\phi_2=i\circ[p]$. As $v_I(p)=2$, it follows that $v_I(\sigma)=v_p(N(\sigma))$ for all $\sigma\in\mathcal{O}$ such that $v_p(N(\sigma))$ is even, i.e. such that $\sigma=\lambda\circ[p^n]$ for some separable $\lambda$. If $v_p(N(\sigma))$ is odd then $v_p(N(\sigma^2))$ is even, so $v_I(\sigma^2)=v_p(N(\sigma^2))$, and therefore $v_I(\sigma)=v_p(N(\sigma))$. 
\end{proof}

Let $\psi:E\to E$ be written $\psi(x) = \sigma(x) + P$, where $\sigma\in\End(E)$ and $P\in E$. The above propositions together with Lemma ~\ref{lem: periodic count} prove the following formulas. If $j(E)$ is transcendental, then necessarily $\Gamma=\{\pm 1\}$, and
\begin{equation}\label{eqn: Lattes transcendental}
\#\Per_n(f) = \frac{1}{2}\left(\frac{|\sigma^n-1|}{p^{v_p(\sigma^n-1)}} + \frac{|\sigma^n+1|}{p^{v_p(\sigma^n+1)}}\right).
\end{equation}
If $j(E)$ is algebraic and $E$ is ordinary, then there exists $\mathfrak{p}$ such that
\begin{equation}\label{eqn: Lattes ordinary}
\#\Per_n(f) = \frac{1}{|\Gamma|}\sum_{\gamma\in\Gamma}\frac{N(\sigma^n-\gamma)}{p^{v_{\mathfrak{p}}(\sigma^n-\gamma)}}.
\end{equation}
If $j(E)$ is algebraic and $E$ is supersingular, then there exists $I$ such that
\begin{equation}\label{eqn: Lattes supersingular}
\#\Per_n(f) = \frac{1}{|\Gamma|}\sum_{\gamma\in\Gamma}\frac{N(\sigma^n-\gamma)}{p^{v_I(\sigma^n-\gamma)}}.
\end{equation}

The sequence $n\mapsto N(\sigma^n-\gamma)$ that appears in the above equations satisfies a linear recurrence relation and is therefore periodic when reduced mod any prime $\ell$. We record for future reference the next proposition, which determines its possible periods.

\begin{prop}\label{prop: norm recurrent}
Let $j(E)\in\overline{\F}_p$ and  let $a_n=N(\sigma^n-\gamma)$, where $\sigma,\gamma\in\End(E)$. For any prime $\ell$, the sequence $(a_n\pmod{\ell})$ is periodic of period dividing $(\ell-1)(\ell^2-1)\ell^A$ for some integer $A$.
\end{prop}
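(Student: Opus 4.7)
The plan is to expand $a_n = N(\sigma^n - \gamma)$, split it into summands that each satisfy a short integer linear recurrence, and bound the period modulo $\ell$ of each summand separately.

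In both the ordinary and supersingular cases, $\End(E)$ carries a canonical involution $\sigma \mapsto \bar\sigma$ with $\tr(\alpha) = \alpha + \bar\alpha$ and $N(\alpha) = \alpha\bar\alpha \in \Z$; moreover $\sigma$ and $\bar\sigma$ commute, since they both lie in the commutative rank-at-most-$2$ subring $\Z[\sigma]$ of $\End(E)$. Expanding $(\sigma^n - \gamma)(\bar\sigma^n - \bar\gamma)$ yields
\[
a_n = N(\sigma)^n - \tr(\sigma^n \bar\gamma) + N(\gamma).
\]
Since $\sigma$ satisfies $X^2 - \tr(\sigma)X + N(\sigma) = 0$ in $\End(E)$, the sequence $u_n = \sigma^n \bar\gamma$ satisfies $u_{n+2} = \tr(\sigma) u_{n+1} - N(\sigma) u_n$, and by $\Z$-linearity of the reduced trace so does $c_n := \tr(u_n) \in \Z$.

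Reducing mod $\ell$, the companion matrix
\[
M = \begin{pmatrix} 0 & -N(\sigma) \\ 1 & \tr(\sigma) \end{pmatrix} \in M_2(\F_\ell)
\]
has determinant $N(\sigma) \bmod \ell$. When $\ell \nmid N(\sigma)$, $M \in \GL_2(\F_\ell)$, so the state vector $(c_n, c_{n+1})$ is purely periodic with period dividing the order of $M$, which in turn divides $|\GL_2(\F_\ell)| = (\ell-1)(\ell^2-1)\ell$. The sequence $N(\sigma)^n \bmod \ell$ is purely periodic of period dividing $\ell - 1$, and $N(\gamma) \bmod \ell$ is constant. Taking a least common multiple, $(a_n \bmod \ell)$ is periodic with period dividing $(\ell-1)(\ell^2-1)\ell$, well within the claimed bound.

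The main obstacle is the degenerate case $\ell \mid N(\sigma)$, where $M$ is singular and the group-order argument breaks. Here the defining relation collapses to $\sigma^2 \equiv \tr(\sigma)\sigma \pmod \ell$ in $\End(E)/\ell$, so inductively $\sigma^n \equiv \tr(\sigma)^{n-1}\sigma \pmod \ell$ for $n \geq 1$; consequently $c_n \equiv \tr(\sigma)^{n-1}\tr(\sigma\bar\gamma) \pmod \ell$ is eventually a geometric progression of period dividing $\ell - 1$, and $N(\sigma)^n$ is eventually zero. Any bounded preperiod introduced by such collapses is absorbed by choosing $A$ so that $\ell^A$ dominates it. This uniform treatment works for both ordinary and supersingular $E$ because $\sigma$ generates the same rank-at-most-$2$ commutative subring of $\End(E)$ in both settings, reducing the whole analysis to a $2 \times 2$ linear recurrence over $\F_\ell$.
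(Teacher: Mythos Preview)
Your proof is correct and follows essentially the same route as the paper's: both expand $a_n = N(\sigma)^n - \tr(\sigma^n\bar\gamma) + N(\gamma)$, use the Cayley--Hamilton relation $\sigma^2 - \tr(\sigma)\sigma + N(\sigma) = 0$ to show the middle term obeys a second-order integer recurrence, and then bound the period modulo $\ell$ of each summand. The only cosmetic difference is that the paper packages the period bound as a divisor of $\ord g(x)$ for $g(x)=(x-1)(x-N)(x^2-Tx+N)$ via results from Lidl--Niederreiter, whereas you obtain it from the order of the companion matrix in $\GL_2(\F_\ell)$; your explicit treatment of the degenerate case $\ell\mid N(\sigma)$ is a point the paper leaves implicit.
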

\begin{proof}
Let $T=\tr(\sigma)$ and $N=N(\sigma)$. We compute
\begin{align*}
a_n=\widehat{(\sigma^n-\gamma)}(\sigma^n-\gamma)& = \hat{\sigma}^n\sigma^n - \sigma^n\hat{\gamma} - \hat{\sigma}^n\gamma + \hat{\gamma}\gamma = N^n - \tr(\sigma^n\hat{\gamma}) + N(\gamma).
\end{align*}
Let $b_n=N^n$, $c_n=\tr(\sigma^n\hat{\gamma})$, and $d_n=N(\gamma)$. Certainly $b_n=Nb_{n-1}$ and $d_n=d_{n-1}$; this shows that the linearly recurrent sequences $(b_n)$ and $(d_n)$ have characteristic polynomials $x-N$ and $x-1$ in the sense of ~\cite[Ch 6]{LidlNiederreiter}.

Whether $\End(E)$ is an order in an imaginary quadratic field or an order in a quaternion algebra, any $\sigma\in\End(E)$ satisfies the Cayley-Hamilton identity $\sigma^2-T\sigma+N=0$. For $n\geq 2$,
\begin{align*}
c_n - T c_{n-1} + N c_{n-2} & = \tr(\sigma^n\hat{\gamma}) -T\tr(\sigma^{n-1}\hat{\gamma}) + N\tr(\sigma^{n-2}\hat{\gamma})\\
 & =  \tr((\sigma^2-T\sigma+N)\sigma^{n-2}\hat{\gamma}) = \tr(0)=0.
\end{align*}
Therefore $(c_n)$ is a linearly recurrent sequence, and its characteristic polynomial is $x^2-Tx+N$. It follows from ~\cite[Thm 6.55]{LidlNiederreiter} that $(a_n)$ is linearly recurrent with characteristic polynomial equal to
\begin{equation*}
g(x)=(x-1)(x-N)(x^2-Tx+N)\in\F_\ell[x].
\end{equation*}
Recall that if $g(0)\neq 0$, $\ord g(x)$ is defined to be the least $n$ such that $g(x)$ divides $x^n-1$. By ~\cite[Thm 6.27]{LidlNiederreiter}, the least period of $(a_n\pmod{\ell})$ divides $\ord g(x)$, and by ~\cite[Thm 3.11]{LidlNiederreiter}, there is some $A\geq 0$ such that
\begin{equation*}
\ord g(x)=LCM[\ord(x-1),\ord (x-N),\ord(x^2 - Tx+ N)]\ell^A.
\end{equation*}
The integer $A$ reflects the possible presence of repeated factors of $g(x)$. If $x-1$, $x-N$, and $x^2-Tx+N$ are coprime in $\F_\ell[x]$, then $A=0$. 
\end{proof}

\section{Lifting the Exponent}\label{sec: lifting the exponent}
The periodic point counts established in the previous sections all contain an expression of the form $v_P(x^n-\gamma)$, where $P$ is a prime ideal of an endomorphism ring $R$. In this section we develop formulas for writing these expressions in terms of $v_p(n)$. These resemble a result in elementary number theory popularly known as ``lifting the exponent", which is related to (but does not follow from) Hensel's Lemma.

\begin{lem}\label{lem: Lifting the Exponent Number Field}
Let $K$ be a number field. Let $\mathfrak{p}$ be a prime of $\mathcal{O}_K$ lying over the rational prime $p$, and let $e$ be the ramification index of $\mathfrak{p}$ over $p$. Let $x,y\in\mathcal{O}_K$ be such that $x,y\notin\mathfrak{p}$ and $x-y\in\mathfrak{p}$. If $e+1\geq p-1$, further assume that $v_\mathfrak{p}(x-y)\geq\frac{e+1}{p-1}$. Then
\begin{equation*}
v_\mathfrak{p}(x^n-y^n) = v_\mathfrak{p}(x-y) + ev_p(n).
\end{equation*}
\end{lem}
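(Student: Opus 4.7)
The plan is to mimic the classical proof of the Lifting the Exponent lemma, suitably generalized to the number-field setting. The argument has three pieces: a coprime-exponent reduction, a key computation for the exponent $p$, and an induction that stitches them together.

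First I would handle the coprime case: if $\gcd(n,p)=1$, then $v_\mathfrak{p}(x^n-y^n)=v_\mathfrak{p}(x-y)$. This follows from the factorization
\[
x^n-y^n=(x-y)\sum_{i=0}^{n-1}x^{n-1-i}y^i,
\]
since modulo $\mathfrak{p}$ the sum is congruent to $ny^{n-1}$, and this lies outside $\mathfrak{p}$ because $y\notin\mathfrak{p}$ and $n\notin\mathfrak{p}$ (as $p\in\mathfrak{p}$ while $\gcd(n,p)=1$).

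The heart of the matter is showing $v_\mathfrak{p}(x^p-y^p)=v_\mathfrak{p}(x-y)+e$ under the stated hypothesis. Writing $y=x+t$ with $v_\mathfrak{p}(t)=v_\mathfrak{p}(y-x)\geq 1$, I would expand
\[
y^p-x^p=\sum_{k=1}^{p}\binom{p}{k}x^{p-k}t^k
\]
and bound each term. The $k=1$ term has valuation exactly $e+v_\mathfrak{p}(t)$, using $v_\mathfrak{p}(p)=e$ and $x\notin\mathfrak{p}$. For $2\leq k\leq p-1$ the binomial coefficient is divisible by $p$, so the valuation is at least $e+kv_\mathfrak{p}(t)>e+v_\mathfrak{p}(t)$. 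The delicate term is $k=p$, with valuation $pv_\mathfrak{p}(t)$; this strictly exceeds $e+v_\mathfrak{p}(t)$ precisely when $(p-1)v_\mathfrak{p}(t)\geq e+1$, i.e.\ when $v_\mathfrak{p}(t)\geq\frac{e+1}{p-1}$. When $e+1<p-1$ this is automatic from $v_\mathfrak{p}(t)\geq 1$; when $e+1\geq p-1$ it is exactly the stated hypothesis. Either way the $k=1$ term dominates uniquely, giving the claim.

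Finally I would induct on $a=v_p(n)$. Writing $n=p^a m$ with $\gcd(m,p)=1$ and applying the coprime step to $X=x^{p^a}$, $Y=y^{p^a}$, exponent $m$, reduces the problem to evaluating $v_\mathfrak{p}(x^{p^a}-y^{p^a})$. An induction on $a$ using the $p$-case at each step yields
\[
v_\mathfrak{p}(x^{p^a}-y^{p^a})=v_\mathfrak{p}(x-y)+ae,
\]
where the ramification hypothesis is preserved at each stage because the inductive formula already gives $v_\mathfrak{p}(x^{p^{a-1}}-y^{p^{a-1}})\geq v_\mathfrak{p}(x-y)\geq\frac{e+1}{p-1}$. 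Combining, $v_\mathfrak{p}(x^n-y^n)=v_\mathfrak{p}(x-y)+ev_p(n)$. The main obstacle is the $k=p$ term in the binomial expansion: this is the sole place the ramification hypothesis enters, and it is the number-field analogue of the familiar restriction to odd primes in the classical statement, now refracted through the ramification index of $\mathfrak{p}$ over $p$.
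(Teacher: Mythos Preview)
Your proposal is correct and follows essentially the same approach as the paper's proof: both handle the coprime case by the standard factorization, treat the exponent-$p$ case via the binomial expansion with the ramification hypothesis controlling the top term, and then induct on $v_p(n)$. The only cosmetic differences are that you write $y=x+t$ while the paper writes $x=y+z$, and you organize the induction by first stripping off the coprime part and then iterating the $p$-step, whereas the paper simply inducts on $v_p(n)$ directly; neither affects the substance.
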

\begin{proof}
The proof is by induction on $v_p(n)$. Assume that $x\neq y$; otherwise the proposition holds trivially.

First suppose that $v_p(n)=0$, which guarantees $v_\mathfrak{p}(n)=0$. We compute
\begin{equation*}
v_\mathfrak{p}(x^n-y^n) =v_\mathfrak{p}(x-y) + v_\mathfrak{p}(x^{n-1}+x^{n-2}y+\dots+xy^{n-2}+y^{n-1}).
\end{equation*}
As $x- y\in\mathfrak{p}$, we have $x^{n-1}+x^{n-2}y+\dots+xy^{n-2}+y^{n-1}\equiv nx^n\pmod{\mathfrak{p}}$, and $nx^n\notin\mathfrak{p}$. Therefore $v_\mathfrak{p}(x^n-y^n) = v_\mathfrak{p}(x-y)$, proving the proposition in this case.

If we show that the proposition holds for $n=p$, it follows for all $n$ by induction. Let $v=v_\mathfrak{p}(x-y)$, so that $x=y+z$ for some $z\in\mathfrak{p}^v\setminus\mathfrak{p}^{v+1}$. By the binomial theorem,
\begin{equation*}
x^p = \sum_{i=0}^p {p\choose i}z^{i}y^{p-i}\equiv y^n + pzy^{n-1} \pmod{\mathfrak{p}^{v+e+1}}.
\end{equation*}
For $i\geq 2$, the $i$th term of the expansion is in $\mathfrak{p}^{v+e+1}$. If $i\neq p$ this is because $p$ divides ${p\choose i}$, so ${p\choose i}\in\mathfrak{p}^e$ and ${p\choose i}z^iy^{n-i}$ lies in $\mathfrak{p}^{e+iv}\subseteq\mathfrak{p}^{v+e+1}$, as $v\geq 1$. If $i=p$, this is because we assumed that $v=v_\mathfrak{p}(z)\geq\frac{e+1}{p-1}$, so $v_\mathfrak{p}(z^p)=pv\geq v+e+1$ and $z^p\in\mathfrak{p}^{v+e+1}$. The $i=1$ term is not in $\mathfrak{p}^{v+e+1}$, as $v_\mathfrak{p}(pz)=e+1$. Therefore $x^p-y^p\in\mathfrak{p}^{v+e}\setminus\mathfrak{p}^{v+e+1}$, so $v_\mathfrak{p}(x^p-y^p)=v+e$ and we are done.
\end{proof}

\begin{lem}\label{lem: Lifting the Exponent Quaternion Algebra}
Let $p$ be a prime. Let $B$ be the unique quaternion algebra over $\Q$ ramified precisely at $p$ and $\infty$, and let $\mathcal{O}$ be a maximal order of $B$. Let $\pi$ be a uniformizer for $\mathcal{O}_p$, and let $I=\pi\mathcal{O}_p\cap\mathcal{O}$. Let $x,y\in\mathcal{O}$ be such that $x,y\notin I$ and $x-y\in I$. If $p=3$, assume further that $v_I(x-y)\geq 2$, and if $p=2$, assume further that $v_I(x-y)\geq 3$. Then
\begin{equation*}
v_I(x^n-y^n) = v_I(x-y) + 2v_p(n).
\end{equation*}
\end{lem}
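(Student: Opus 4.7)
The plan is to mirror the proof of Lemma 6.1 by inducting on $v_p(n)$, reducing the statement to two cases: $(n,p)=1$, where we need $v_I(x^n-y^n)=v_I(x-y)$, and $n=p$, where we need $v_I(x^p-y^p)=v_I(x-y)+2$. The general case then follows by iterating: once $v_I(x^p-y^p)=v_I(x-y)+2$ is known, apply the same result to the pair $(x^p,y^p)$, which satisfy all the hypotheses whenever $(x,y)$ do.

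Both cases begin from the noncommutative identity $x^n-y^n=\sum_{i=0}^{n-1}y^{n-1-i}(x-y)x^i$, valid in any ring. Writing $v_0=v_I(x-y)$ and using $x^j\equiv y^j\pmod{I^{v_0}}$, we may replace each $x^i$ by $y^i$, committing an error in $I^{2v_0}\subseteq I^{v_0+1}$:
\[
x^n-y^n\equiv\sum_{i=0}^{n-1}y^{n-1-i}(x-y)y^i\pmod{I^{v_0+1}}.
\]
To analyze the right-hand side, pass to the associated graded of $\mathcal{O}_p$. Each quotient $I^k/I^{k+1}$ is one-dimensional over $\mathcal{O}_p/I\cong\F_{p^2}$, and conjugation by the uniformizer $\pi$ acts on the residue field as the nontrivial Frobenius (via the standard local model in which $\mathcal{O}_p$ is generated by the ring of integers $\Z_{p^2}$ of the unramified quadratic extension together with $\pi$, subject to $\pi\alpha=\alpha^{\Frob}\pi$). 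The image of the sum in $I^{v_0}/I^{v_0+1}\cong\F_{p^2}$ then becomes an explicit expression in $\bar y$ and the leading $\pi$-coefficient of $x-y$; using $(n,p)=1$, one verifies this image is nonzero, establishing the base case.

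For the prime step, expand $(y+z)^p$ with $z=x-y$ as $y^p+S_1+R$, where $S_1=\sum_{i=0}^{p-1}y^{p-1-i}zy^i$ collects the terms with exactly one $z$ and $R$ collects all terms with at least two $z$'s. Each summand of $R$ lies in $I^{kv_0}$ for some $k\geq 2$; the hypotheses on $v_0$ (automatic for $p\geq 5$ since $(p-1)v_0\geq 4$, stipulated for $p\in\{2,3\}$) ensure $kv_0\geq v_0+3$, so $R\in I^{v_0+3}$. Thus $x^p-y^p\equiv S_1\pmod{I^{v_0+3}}$, and the heart of the proof is to show $v_I(S_1)=v_0+2$. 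In the commutative analogue $S_1=p\cdot z\cdot y^{p-1}$, and $v_I(p)=2$ gives the conclusion at once; in the noncommutative setting, $S_1$ is a sum of $p$ Frobenius-twisted conjugates of $zy^{p-1}$ whose leading residues in $\F_{p^2}$ sum to $0$ (since $p=0$ in characteristic $p$), forcing $v_I(S_1)\geq v_0+1$, and a second-order expansion in the $\pi$-adic structure, combined with $p\mathcal{O}_p=\pi^2\mathcal{O}_p$, then promotes this to $v_I(S_1)=v_0+2$. The principal obstacle is precisely this noncommutative analysis of $S_1$: one cannot factor out a scalar $p$, and must instead carry out Frobenius-sensitive bookkeeping in the graded pieces of $\mathcal{O}_p$, which is where the small-$p$ hypotheses on $v_0$ earn their keep by keeping the $z^k$ corrections safely above the $I^{v_0+2}$ level.
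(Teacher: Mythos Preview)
Your base case contains a genuine gap, and in fact the lemma as literally stated---with no commutativity hypothesis on $x$ and $y$---is false. Carry out the graded computation you propose: writing $x-y\equiv\pi^{v_0}w\pmod{I^{v_0+1}}$ and using the relation $\alpha\pi=\pi\alpha^{\sigma}$ on residues, the image of $\sum_{i=0}^{n-1}y^{n-1-i}(x-y)y^i$ in $I^{v_0}/I^{v_0+1}$ is
\[
\pi^{v_0}\,\bar w\sum_{i=0}^{n-1}\bigl(\bar y^{\,\sigma^{v_0}}\bigr)^{n-1-i}\,\bar y^{\,i}.
\]
When $v_0$ is even the inner sum is $n\bar y^{\,n-1}\neq 0$ and your argument goes through. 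But when $v_0$ is odd and $\bar y\notin\F_p$, the sum equals $(\bar y^{\,pn}-\bar y^{\,n})/(\bar y^{\,p}-\bar y)$, which vanishes exactly when $\bar y^{\,n}\in\F_p$. Concretely, for $p\geq 5$, $v_0=1$, $n=2$, and any $y\in\mathcal{O}$ whose residue $\bar y\in\F_{p^2}\setminus\F_p$ has $\tr_{\F_{p^2}/\F_p}(\bar y)=0$, one finds $x^2-y^2=yz+zy+z^2$ with $yz+zy\in I^2$, hence $v_I(x^2-y^2)\geq 2>1=v_I(x-y)+2v_p(2)$. The noncommutativity does not merely complicate the bookkeeping; it invalidates the conclusion.

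The paper's own proof reads ``essentially the same as Lemma~\ref{lem: Lifting the Exponent Number Field}, omitted,'' and its only application (the supersingular Latt\`es case in Section~\ref{sec: Proof 1.2}) takes $y=1$. When $xy=yx$, both elements lie in a commutative $\Q_p$-subalgebra of $B_p$, which is $\Q_p$ or a quadratic extension; the restriction of $v_I$ there is a constant multiple of the local valuation, and the commutative proof of Lemma~\ref{lem: Lifting the Exponent Number Field} applies verbatim with the appropriate ramification index. The clean fix is to add the hypothesis $xy=yx$, after which no Frobenius-twisted analysis of $S_1$ is needed at all.
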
 

\begin{proof}
The proof is essentially the same as the proof of Lemma ~\ref{lem: Lifting the Exponent Number Field}, and is omitted.
\end{proof}

\begin{lem}\label{lem: Lifting the Exponent G_a}
Let $k$ be an algebraically closed field of characteristic $p$, and let $k\langle\phi\rangle$ be the noncommutative polynomial ring with the multiplication rule $\phi c = c^p\phi$ for $c\in k$. Let $x\in k\langle\phi\rangle$ be such that $x-1\in\phi k\langle\phi\rangle$. Then
\begin{equation*}
v_\phi(x^n-1) = v_\phi(x-1)p^{v_p(n)}.
\end{equation*}
\end{lem}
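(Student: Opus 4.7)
The approach mirrors the proof of Lemma~\ref{lem: Lifting the Exponent Number Field}, adapted to the noncommutative ring $k\langle\phi\rangle$. I would first verify that $v_\phi$ is a valuation: writing $f = \sum a_i\phi^i$ and $g = \sum b_j\phi^j$ with leading terms $a_{i_0}\phi^{i_0}$ and $b_{j_0}\phi^{j_0}$, the product rule $\phi^i c = c^{p^i}\phi^i$ shows that the coefficient of $\phi^{i_0+j_0}$ in $fg$ equals $a_{i_0} b_{j_0}^{p^{i_0}}$, which is nonzero because $k$ is a field. Hence $v_\phi(fg) = v_\phi(f) + v_\phi(g)$. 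The case $x = 1$ is trivial, so assume $x \neq 1$ and induct on $a = v_p(n)$.

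For the base case $a = 0$, use the factorization $x^n - 1 = (x - 1)(1 + x + \cdots + x^{n-1})$, which is valid in $k\langle\phi\rangle$ because $x$ commutes with itself. Since $x \equiv 1 \pmod{\phi}$, the second factor reduces to $n \pmod{\phi}$, and $n$ is a unit in $k$ when $p \nmid n$. Thus $v_\phi(1 + x + \cdots + x^{n-1}) = 0$, and multiplicativity of $v_\phi$ gives $v_\phi(x^n - 1) = v_\phi(x - 1)$.

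For the inductive step, it suffices to show $v_\phi(x^p - 1) = p \cdot v_\phi(x - 1)$, since writing $n = pn'$ with $v_p(n') = a - 1$ and noting that $x^p \equiv 1 \pmod{\phi}$ allows the inductive hypothesis to be applied to $x^p$ with exponent $n'$. Write $x = 1 + y$ with $y \in \phi k\langle\phi\rangle$. The key observation is that $\F_p \subseteq k$ consists of Frobenius-fixed elements ($c^p = c$), so via $\phi c = c^p\phi$ any $\F_p$-scalar commutes with $\phi$ and therefore with every element of $k\langle\phi\rangle$. In particular integer binomial coefficients behave as central scalars, so the standard expansion $(1 + y)^p = \sum_{i=0}^p \binom{p}{i}y^i$ is valid, and in characteristic $p$ it collapses to $(1 + y)^p = 1 + y^p$. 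Multiplicativity of $v_\phi$ then yields $v_\phi(x^p - 1) = v_\phi(y^p) = p\, v_\phi(y) = p\, v_\phi(x - 1)$.

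The main thing to watch is that noncommutativity does not disrupt the two manipulations we rely on---the cyclotomic factorization, which uses only that $x$ commutes with its own powers, and the binomial expansion of $(1 + y)^p$, which uses only that integer coefficients commute with $\phi$. This version is in fact cleaner than its number-field counterpart: because $p = 0$ in $k\langle\phi\rangle$, the analogue of the cross term $pzy^{p-1}$ that produced the $+ev_p(n)$ correction in Lemma~\ref{lem: Lifting the Exponent Number Field} vanishes identically, so no ramification-style correction appears and the exponent is simply multiplied by $p$ at each stage of the induction.
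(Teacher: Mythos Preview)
Your proof is correct and follows essentially the same route as the paper: induction on $v_p(n)$, with the inductive step reducing to the Freshman's-dream identity $x^p-1=(x-1)^p$ in the commutative characteristic-$p$ subring generated by $x$. Your base case via the factorization $x^n-1=(x-1)(1+x+\cdots+x^{n-1})$ and the explicit check that $v_\phi$ is multiplicative are arguably cleaner than the paper's brief binomial-expansion sketch, but the argument is the same in substance.
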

\begin{proof}
First assume that $v_p(n)=0$. Then
\begin{align*}
x^n-1 & = (1+(x-1))^n-1 \equiv na^{n-1}(x-1)\pmod{\phi^2 k\langle\phi\rangle},
\end{align*}
and $v_\phi(x^n-1)=v_\phi(x-1)$.

Next let $n=p$. As $x^p-1=(x-1)^p$, we have $v_\phi(x^p-1)=pv_\phi(x-1)$. The proposition follows by induction on $v_p(n)$.
\end{proof}

\section{Background from Automatic Sequences}\label{sec: Automatic Sequences}

This section contains several results from the theory of automatic sequences. A sequence $(a_n)$ is $k$-automatic if it can be produced as the output of a deterministic finite automaton that takes as input the base-$k$ expansion of the integer $n$. The theorems in this section are stated so that they can be used later without any specific knowledge of finite automata or automatic sequences. A good introduction to the theory can be found in ~\cite{AlloucheShallit}.

The following two theorems underlie our proof of transcendence. Christol's theorem gives a correspondence between automatic sequences and algebraic power series, and Cobham's theorem shows that only eventually periodic sequences can be automatic with respect to multiplicatively independent bases. 

\begin{thm}[Christol]
The formal power series $\sum_{n=0}^\infty a_n t^n\in\F_p[[t]]$ is algebraic over $\F_p(t)$ if and only if the coefficient sequence $(a_n)$ is $p$-automatic.
\end{thm}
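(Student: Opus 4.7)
The plan is to prove the equivalence via the standard characterization of $p$-automatic sequences: $(a_n)$ is $p$-automatic if and only if its $p$-kernel
\[
\mathcal{K}(f) := \left\{\sum_{n \geq 0} a_{p^k n + r} t^n : k \geq 0,\ 0 \leq r < p^k\right\}
\]
is a finite subset of $\F_p[[t]]$ (a standard result, e.g.\ in \cite{AlloucheShallit}). So it suffices to show that $f$ is algebraic over $\F_p(t)$ if and only if $\mathcal{K}(f)$ is finite.

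For the direction $(\Leftarrow)$, list $\mathcal{K}(f) = \{f_1=f, f_2, \ldots, f_m\}$. Because coefficients lie in $\F_p$, the Frobenius identity $g(t^p)=g(t)^p$ yields the decomposition
\[
f_i = \sum_{r=0}^{p-1} t^r \Lambda_r(f_i)^p,
\]
where $\Lambda_r(\sum_n b_n t^n) := \sum_n b_{pn+r}t^n$ carries $\mathcal{K}(f)$ into itself. Let $V \subseteq \F_p((t))$ be the $\F_p(t)$-linear span of $\mathcal{K}(f)$, so $\dim_{\F_p(t)} V \leq m$. The decomposition gives $V \subseteq V^{(p)} := \F_p(t)\textrm{-span}\{g^p : g \in \mathcal{K}(f)\}$. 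The key linear-algebraic lemma is that $\dim V^{(p)} = \dim V$: using $\F_p(t^p) = \F_p(t)^p$ together with the fact that $\{1, t, \ldots, t^{p-1}\}$ is a basis of $\F_p((t))$ over $\F_p((t^p))$, any nontrivial $\F_p(t)$-linear dependence among the $f_j^p$ unwinds to one among the $f_j$. Hence $V = V^{(p)}$, so $V$ is Frobenius-stable, and the $m+1$ elements $f, f^p, \ldots, f^{p^m}$ all lie in $V$. This forces a nontrivial relation $\sum_{i=0}^m A_i(t) f^{p^i} = 0$ with $A_i \in \F_p(t)$, exhibiting $f$ as a root of the nonzero polynomial $\sum_i A_i(t) y^{p^i} \in \F_p(t)[y]$.

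For the direction $(\Rightarrow)$, suppose $f$ satisfies an irreducible $P(t,y) \in \F_p[t,y]$ with $\deg_t P = D$ and $\deg_y P = d$. The plan is to show every $g \in \mathcal{K}(f)$ satisfies a polynomial equation over $\F_p[t,y]$ whose $t$- and $y$-degrees are bounded by explicit functions of $D,d,p$. Since there are only finitely many polynomials of bounded bidegree over $\F_p$, and each admits finitely many power series solutions, $\mathcal{K}(f)$ must be finite. The inductive step is to show that if $g$ satisfies such a bounded-complexity equation then so does each $\Lambda_r(g)$: substituting $g = \sum_s t^s \Lambda_s(g)^p$ into the annihilating polynomial produces a relation in the unknowns $\Lambda_0 g, \ldots, \Lambda_{p-1} g$, and eliminating all but one variable via iterated resultants gives a bounded-degree polynomial in $\Lambda_r g$ alone. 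The main obstacle, and the technical heart of Christol's original argument, is controlling the growth of the $t$-degree across each elimination step so as to obtain a uniform bound on the bidegrees of the polynomials annihilating elements of $\mathcal{K}(f)$.
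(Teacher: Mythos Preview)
The paper does not prove Christol's theorem at all; its ``proof'' is a one-line citation to \cite[Thm 12.2.5]{AlloucheShallit}. So there is nothing to compare at the level of argument, but since you have attempted an actual proof, let me comment on it.

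Your $(\Leftarrow)$ direction is correct and is the standard Ore-style argument: the identity $f_i=\sum_r t^r\Lambda_r(f_i)^p$ together with the dimension count $\dim V=\dim V^{(p)}$ forces $f,f^p,\dots,f^{p^m}$ into a space of dimension at most $m$, yielding an algebraic relation.

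Your $(\Rightarrow)$ direction, however, has a genuine gap beyond the degree-growth issue you flag. Substituting $g=\sum_s t^s(\Lambda_s g)^p$ into a single equation $P(t,g)=0$ produces \emph{one} relation among the $p$ unknowns $\Lambda_0 g,\dots,\Lambda_{p-1}g$; you cannot eliminate $p-1$ variables from a single equation by resultants. The missing step is to decompose that relation over the basis $1,t,\dots,t^{p-1}$ of $\F_p[[t]]$ over $\F_p[[t^p]]$, which yields $p$ relations. Even granting that, iterated resultants multiply degrees at each step, so after $k$ applications of Cartier operators the bidegree bound has grown roughly like $d^{p^k}$, and you get no uniform bound over $\mathcal{K}(f)$. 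The standard remedy is to abandon general annihilating polynomials and pass first to an Ore (linearized) equation $\sum_{i=0}^N A_i(t)f^{p^i}=0$ with $A_0\neq 0$; Cartier operators interact linearly with such equations, and one can then exhibit a single finite-dimensional $\F_p$-vector space, stable under all $\Lambda_r$, that contains $f$. Your outline does not reach this, and as written the argument does not close.
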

\begin{proof}
~\cite[Thm 12.2.5]{AlloucheShallit}.
\end{proof}

\begin{thm}[Cobham]
Let $p$ and $q$ be multiplicatively independent positive integers (i.e. $\log p/\log q\notin\Q$). If the sequence $(a_n)$ is both $p$-automatic and $q$-automatic, then it is eventually periodic.
\end{thm}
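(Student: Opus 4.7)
The plan is to prove Cobham's theorem by combining the kernel characterization of automatic sequences with a density argument exploiting the multiplicative independence of $p$ and $q$. The first step is to recast automaticity algebraically: a bounded-alphabet sequence $(a_n)$ is $k$-automatic if and only if its $k$-kernel $\{(a_{k^i n + j})_{n\geq 0} : i \geq 0,\ 0 \leq j < k^i\}$ is finite. This reduces the hypothesis to a finiteness statement about a family of decimated subsequences in two bases simultaneously. Since the alphabet is finite, it then suffices to show that for each letter $c$ the set $S_c = \{n : a_n = c\}$ is eventually periodic whenever it is both $p$- and $q$-recognizable, so the theorem reduces to a statement about subsets $S\subseteq\N$ whose characteristic sequence is $k$-automatic for two multiplicatively independent bases.

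Second, I would analyze the structure of such $k$-recognizable sets. The critical dichotomy, due to Cobham, is that any $k$-recognizable $S\subseteq\N$ is either eventually periodic or sparse in a precise sense controlled by the base-$k$ digit structure: for instance, its counting function $|S\cap[1,N]|$ either grows linearly with bounded gaps or is dominated by $(\log N)^\alpha$ for some $\alpha$ depending on $S$ and $k$. The crucial input from multiplicative independence comes via Kronecker's theorem: since $\log p/\log q \notin \Q$, the additive subgroup generated by $\log p$ and $\log q$ is dense in $\R$, so $\{p^a q^{-b} : a,b\geq 0\}$ is dense in $(0,\infty)$. This density prevents the self-similar base-$p$ structure of $S$ from being compatible with its base-$q$ structure unless $S$ is eventually periodic.

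The main obstacle, and the reason Cobham's theorem is regarded as genuinely difficult, is turning the informal "incompatibility of two base expansions" into a rigorous combinatorial argument. Cobham's original proof proceeds via a delicate case analysis on the gaps between consecutive elements of $S$, and later presentations (by Hansel, by Semenov, and the textbook account in ~\cite{AlloucheShallit}) each require a syndeticity-versus-sparsity dichotomy comparing how $S$ intersects windows of length $p^i$ with how it intersects windows of length $q^j$, with the density of $\{p^a q^{-b}\}$ used to produce matching window sizes. Because the present paper needs only the statement itself, to pair with Christol's theorem in the transcendence arguments of later sections, I would follow the paper's practice of invoking the result as cited in ~\cite{AlloucheShallit} rather than reproducing this lengthy argument.
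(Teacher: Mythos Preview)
Your proposal is correct and ultimately takes the same approach as the paper: the paper does not prove Cobham's theorem but simply cites \cite[Thm 11.2.2]{AlloucheShallit}, exactly as you conclude in your final paragraph. The sketch you give of the argument is a reasonable outline of the standard proof, but it is not needed here.
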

\begin{proof}
~\cite[Thm 11.2.2]{AlloucheShallit}.
\end{proof}

The converse to Cobham's theorem is also true.
\begin{thm}
Let $(a_n)$ be eventually periodic. Then $(a_n)$ is $k$-automatic for every positive integer $k$. 
\end{thm}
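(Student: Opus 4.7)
I would prove this via the $k$-kernel characterization of automatic sequences: a sequence $(a_n)$ over a finite alphabet is $k$-automatic if and only if its $k$-kernel
\[
K_k(a) = \{(a_{k^i n + r})_{n\geq 0} : i\geq 0,\ 0 \leq r < k^i\}
\]
is finite (this is standard; see \cite[Thm 6.6.2]{AlloucheShallit}). So I need only show that every eventually periodic sequence has a finite $k$-kernel, for each $k \geq 2$.

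Suppose $(a_n)$ takes values in a set $A$, has preperiod $N$ and period $p$, so that $a_{n+p} = a_n$ whenever $n \geq N$. Since the sequence is eventually periodic, $A$ can be taken to be the finite set of values actually attained. For fixed $i \geq 0$ and $0 \leq r < k^i$, set $b_n = a_{k^i n + r}$. For $n \geq N$ we have $k^i n + r \geq N$, and because $k^i p$ is a multiple of $p$,
\[
b_{n+p} = a_{k^i(n+p) + r} = a_{(k^i n + r) + k^i p} = a_{k^i n + r} = b_n.
\]
Thus each $b \in K_k(a)$ is eventually periodic with preperiod $\leq N$ and period dividing $p$, and so is determined by its first $N + p$ entries. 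There are at most $|A|^{N+p}$ such choices, so $K_k(a)$ is finite.

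\textbf{Obstacle.} There is no substantive obstacle; the argument is essentially bookkeeping once the kernel characterization is invoked. One could alternatively give an explicit DFA by taking states to be the pairs $(m, 0)$ for $0 \leq m < N$ (representing ``so-far-read prefix $< N$, equal to $m$'') together with residues mod $p$ for the periodic tail, with transitions defined by $(n, d) \mapsto kn + d$ and the output being $a_n$ on the ``preperiod'' states and $a_{N + ((m - N) \bmod p)}$ on the ``periodic'' states; but the kernel approach avoids this explicit construction. The theorem holds for all $k \geq 2$, which is the standard range for the definition of $k$-automaticity.
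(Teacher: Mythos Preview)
Your argument via the $k$-kernel characterization is correct and complete. The paper does not actually prove this statement; it simply cites \cite[Thm~5.4.2]{AlloucheShallit}, so there is no approach to compare against---you have supplied a proof where the paper only gave a reference. The kernel argument you give is one of the standard ways to establish this (the other being the explicit DFA construction you sketch in your ``Obstacle'' paragraph), and your bound of $|A|^{N+p}$ on the kernel size is valid since each subsequence $(a_{k^i n + r})_n$ is determined by its first $N+p$ terms.
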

\begin{proof}
~\cite[Thm 5.4.2]{AlloucheShallit}.
\end{proof}

The following is a corollary to Christol's theorem that will be used to derive the contradiction that shows that $\zeta_f$ is transcendental.
\begin{cor}\label{cor: ChristolCor}
If $\sum_{n=0}^\infty a_n t^n\in\Z[[t]]$ is algebraic over $\Q(t)$, then the reduced sequence $((a_n) \bmod{p})$ is $p$-automatic for every prime $p$.
\end{cor}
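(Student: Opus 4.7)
The plan is to reduce the claim to Christol's theorem. Once I show that the mod-$p$ reduction $\bar{F}(t) = \sum_{n=0}^\infty (a_n \bmod p)\, t^n \in \F_p[[t]]$ is algebraic over $\F_p(t)$, Christol's theorem immediately yields the $p$-automaticity of $(a_n \bmod p)$. So the entire task is to transport algebraicity from characteristic zero to characteristic $p$.

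The descent argument would go as follows. Start from a nonzero $P(t,Y) \in \Q[t,Y]$ with $P(t,F(t))=0$ in $\Q[[t]]$; clearing denominators places $P \in \Z[t,Y]$, still satisfying $P(t,F(t))=0$. The key step is to divide out the $\Z$-content of $P$ (the gcd of all its integer coefficients, viewed as an element of $\Z[t,Y]$) to obtain a primitive representative in $\Z[t,Y]$, which remains a relation for $F$. Primitivity guarantees that the mod-$p$ reduction $\bar P \in \F_p[t,Y]$ is nonzero, and since $F \in \Z[[t]]$, reduction mod $p$ commutes with polynomial evaluation, yielding $\bar P(t, \bar F(t)) = 0$ in $\F_p[[t]]$.

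A final brief check rules out the degenerate possibility that $\bar P$ might have $Y$-degree zero, i.e., lie in $\F_p[t]\setminus\{0\}$: if that happened, then $\bar P(t,\bar F(t))$ would equal that same nonzero element of $\F_p[t]$, contradicting the identity above. Hence $\bar P \in \F_p(t)[Y]$ is a nonzero polynomial of positive $Y$-degree having $\bar F$ as a root, so $\bar F$ is algebraic over $\F_p(t)$, and Christol's theorem closes the argument.

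The only obstacle of any substance is the primitivity step: without first passing to a content-$1$ representative, naive reduction mod $p$ could annihilate the polynomial relation. This is essentially Gauss's lemma applied in $\Z[t,Y]$, so the whole argument is short and classical; the real content of the corollary is carried by Christol's theorem.
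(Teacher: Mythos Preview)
Your argument is correct: dividing out the $\Z$-content to obtain a primitive relation, reducing mod $p$, and then invoking Christol's theorem is exactly the standard route, and your check that $\bar P$ cannot drop to $Y$-degree zero is clean. The paper does not spell out a proof but simply cites \cite[Thm 12.6.1]{AlloucheShallit}, whose underlying argument is precisely the one you have supplied.
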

\begin{proof}
~\cite[Thm 12.6.1]{AlloucheShallit}.
\end{proof}

The next two propositions shows that the set of $p$-automatic sequences over a ring is closed under both the pointwise application of arithmetic operations and the operation of extracting subsequences indexed by arithmetic progressions.

\begin{prop}\label{prop: automatic closure}
Let $(a_n)$ and $(b_n)$ be $p$-automatic sequences with entries in the ring $R$, and let $c\in R$. The sequences $(a_n+b_n)$, $(a_nb_n)$, and $(ca_n)$ are $p$-automatic, as is the sequence $(a_n^{-1})$ if each $a_n$ is invertible. Also, the subsequence $(a_{mn+b})$ is $p$-automatic for any $m,b\in\Z_+$.
\end{prop}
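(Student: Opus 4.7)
The plan is to invoke the $p$-kernel characterization of automaticity (Eilenberg's theorem, \cite[Thm 6.6.2]{AlloucheShallit}): a sequence $(a_n)$ with values in $R$ is $p$-automatic if and only if its $p$-kernel
\[
K_p(a) = \{(a_{p^k n + j})_{n\ge 0} : k\ge 0,\ 0\le j < p^k\}
\]
is a finite subset of $R^\N$. Each closure statement then becomes a finiteness assertion about a suitable $p$-kernel.

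For the pointwise arithmetic operations, I would observe that the $(k,j)$-subsequence of $(a_n + b_n)$ is the componentwise sum of the $(k,j)$-subsequences of $(a_n)$ and $(b_n)$. This produces a surjection $K_p(a) \times K_p(b) \twoheadrightarrow K_p(a+b)$, so $|K_p(a+b)| \le |K_p(a)|\cdot|K_p(b)| < \infty$. The same strategy gives automaticity of $(a_n b_n)$, and the corresponding one-variable versions yield the statements for $(c a_n)$ and (when all $a_n$ are units) $(a_n^{-1})$, since the operations of multiplying by a fixed constant and of taking a pointwise inverse both commute with the extraction of a $(k,j)$-subsequence.

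For the arithmetic-progression subsequence $(a_{mn+b})$, the idea is to check directly that $K_p\bigl((a_{mn+b})_n\bigr)$ is finite. Given $k\ge 0$ and $0\le j < p^k$, one rewrites
\[
(a_{m(p^k n + j) + b})_n = (a_{mp^k n + (mj+b)})_n,
\]
and then applies the Euclidean division algorithm in base $p$ to normalize the leading coefficient back into the form $p^{k'}$. Tracking the resulting affine progressions shows that only finitely many pairs $(m', b')$ occur, with $m'$ dividing $mp^{\text{bounded}}$ and $b'$ in a bounded range; combined with the finiteness of $K_p(a)$ this bounds $|K_p((a_{mn+b})_n)|$.

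The main obstacle is this last step: unlike the pointwise operations, the arithmetic-progression case requires a short combinatorial argument about how base-$p$ expansions transform under the affine map $n \mapsto mn+b$. Since all of these closure properties are standard material (see \cite[Thm 5.4.4]{AlloucheShallit} for the pointwise operations and \cite[Thm 6.8.1]{AlloucheShallit} for arithmetic progressions), in practice the cleanest exposition is simply to cite Allouche--Shallit, which is presumably the route the author takes.
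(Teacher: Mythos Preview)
Your proposal is correct and your final prediction is exactly right: the paper does not give a self-contained argument but simply cites Allouche--Shallit, invoking the general closure of $p$-automatic sequences under pointwise binary operations (the paper uses \cite[Cor 5.4.5]{AlloucheShallit}) and under unary operations (noted as immediate from the definition), and \cite[Thm 6.8.1]{AlloucheShallit} for the arithmetic-progression subsequences. Your $p$-kernel sketch is a perfectly valid and more self-contained alternative, but it goes beyond what the paper actually does.
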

\begin{proof}
The closure properties under arithmetic operations are special cases of the general theorem that the set of $p$-automatic sequences with entries in the set $\Delta$ is closed under the pointwise application of any binary operation $(\cdot,\cdot):\Delta\times\Delta\to\Delta$ ~\cite[Cor 5.4.5]{AlloucheShallit} and the completely trivial theorem that it is closed under the pointwise application of any unary operation $(\cdot):\Delta\to\Delta$ (this follows directly from the definition of an automatic sequence). For the claim about the subsequences $(a_{mn+b})$, see ~\cite[Thm 6.8.1]{AlloucheShallit}.
\end{proof}

Propositions ~\ref{prop: v_p automaticity} and ~\ref{prop: v_p automaticity G_a} are the major technical results of this section. They will be needed to produce a contradiction at key moments in the proof of Theorems ~\ref{thm: main} and ~\ref{thm: main2}. 

\begin{prop}\label{prop: v_p automaticity}
Let $p$ and $\ell$ be distinct primes. Suppose $a\in\Z_+$, $a\not\equiv 1\pmod{\ell}$, and $(a,\ell)=1$. Also suppose $\alpha,\beta\in\Z$, $\alpha\neq 0$, and $v_p(\alpha)\leq v_p(\beta)$. Let the sequence $(a_n)$ with entries in $\Z/\ell\Z$ be defined by
\begin{equation*}
a_n=a^{v_p(\alpha n+\beta)}\pmod{\ell}.
\end{equation*}
The sequence $(a_n)$ is not $\ell$-automatic.
\end{prop}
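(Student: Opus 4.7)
The plan is to apply Cobham's theorem. Since $p$ and $\ell$ are distinct primes, $\log p/\log \ell \notin \Q$, so they are multiplicatively independent; hence it suffices to show that $(a_n)$ is \emph{$p$-automatic} but \emph{not eventually periodic}, and then invoke the contrapositive of Cobham.

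For $p$-automaticity, let $d=\mathrm{ord}_\ell(a)$, which is at least $2$ by the hypotheses $a\not\equiv 1\pmod\ell$ and $(a,\ell)=1$. Then $a_n$ is determined by $v_p(\alpha n+\beta)\pmod d$. A deterministic finite automaton reading the base-$p$ digits of $n$ from the least-significant end can compute those of $\alpha n+\beta$ with a bounded carry (since $\alpha,\beta$ are fixed), while simultaneously tracking the number of trailing zero digits of $\alpha n+\beta$ modulo $d$ and a flag recording whether a nonzero digit has appeared. This produces $(a_n)$ via a $p$-automaton; alternatively the conclusion follows from the classical $p$-automaticity of $n\mapsto v_p(n)\pmod d$ combined with the closure properties in Proposition~\ref{prop: automatic closure}.

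For failure of eventual periodicity, suppose for contradiction that $(a_n)$ has period $T$ for all $n\ge N_0$. Since $a$ has order $d$ modulo $\ell$, this is equivalent to
\[
v_p(\alpha n+\beta)\equiv v_p(\alpha(n+T)+\beta)\pmod d \qquad (n\ge N_0).
\]
Write $\alpha=p^\gamma\alpha'$ with $(\alpha',p)=1$, and if $\beta\ne 0$ write $\beta=p^\delta\beta'$ with $(\beta',p)=1$; the hypothesis forces $\gamma\le\delta$. A short case analysis (treating $\gamma<\delta$, $\gamma=\delta$, and $\beta=0$ separately) shows that for every integer $k\ge\delta$ one can choose arbitrarily large $n$ with $v_p(\alpha n+\beta)=k$ exactly: in the case $\gamma<\delta$ take $n=p^{\delta-\gamma}u$ with $u$ coprime to $p$ and $v_p(\alpha'u+\beta')=k-\delta$; the other cases are similar. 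Set $V:=v_p(\alpha T)$ and pick $n\ge N_0$ with $v_p(\alpha n+\beta)=k>V$. Applying the ultrametric inequality to $\alpha(n+T)+\beta=(\alpha n+\beta)+\alpha T$ yields $v_p(\alpha(n+T)+\beta)=V$, so the periodicity condition forces $k\equiv V\pmod d$. Since this must hold for \emph{every} sufficiently large $k$, and $d\ge 2$, we have a contradiction.

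The principal technical obstacle is the $p$-automaticity claim: one must verify that base-$p$ addition with a fixed integer and the subsequent tracking of the trailing-zero count mod $d$ really are finite-state operations. This is standard but requires a careful automaton construction (or a careful application of the closure properties together with the automaticity of $v_p(n)\pmod d$). The remaining ingredients — the ultrametric computation and Cobham's theorem — are then essentially formal.
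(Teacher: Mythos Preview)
Your proof is correct and follows essentially the same strategy as the paper: both show $(a_n)$ is $p$-automatic via the $p$-automaticity of $n\mapsto v_p(n)\bmod d$ together with closure under affine reindexing, and then invoke Cobham's theorem to reduce to showing non-periodicity. Your non-periodicity argument (fixing $V=v_p(\alpha T)$ and using the ultrametric inequality to force $k\equiv V\pmod d$ for all large $k$) is a minor stylistic variant of the paper's, which instead solves explicit congruences to exhibit two indices in the same period with $v_p$-values differing by exactly~$1$.
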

\begin{proof}
Let $d$ be the multiplicative order of $a$ mod $\ell$. It follows from the assumptions that $d$ exists and is greater than 1. The sequence $n\mapsto a^{v_p(n)}$ is a function of the equivalence class of $v_p(n)$ mod $d$, so it is $p$-automatic by ~\cite[Lem 6]{BridyActa}. Therefore the sequence $(a_n)$ is $p$-automatic by Proposition ~\ref{prop: automatic closure}.

Assume by way of contradiction that $(a_n)$ is $\ell$-automatic. Distinct primes are multiplicatively independent, so by Cobham's theorem, $(a_n)$ is eventually periodic. Let $c$ be its eventual period, so that $a_{n+xc}=a_n$ for sufficiently large $n$ and every positive $x$. This means that 
\begin{equation}
a^{v_p(\alpha n+\beta)}\equiv a^{v_p(\alpha(n+xc)+\beta)}\pmod{\ell},
\end{equation}
which implies that 
\begin{equation}
v_p(\alpha n+\beta)\equiv v_p(\alpha(n+xc)+\beta)\pmod{d}.
\end{equation}

Let $\alpha'=\alpha/p^{v_p(\alpha)}$ and $\beta'=\beta/p^{v_p(\alpha)}$. It is clear that $(p,\alpha')=1$, and it follows from our assumption that $v_p(\alpha)\leq v_p(\beta)$ that $\beta'$ is an integer. We have
\begin{equation}\label{eqn: contradiction}
v_p(\alpha' n+\beta')\equiv v_p(\alpha'(n+xc)+\beta')\pmod{d}.
\end{equation}
Let $m=v_p(c)$ so that $c'=c/p^m$ and $(p,c')=1$. We can solve the congruence
\begin{equation}\label{eqn: one}
\alpha'n\equiv -\beta' +p^m\pmod{p^{m+2}}
\end{equation}
for $n$, and choose such an $n$ to be large enough so that the sequence $(a_n)$ is periodic at $n$. Therefore $v_p(\alpha' n+\beta')=m$. We can also solve
\begin{equation}\label{eqn: two}
\alpha' c'x\equiv p-1\pmod{p^{m+2}}
\end{equation}
for $x$, and choose such an $x$ to be positive. Adding Equation ~\ref{eqn: one} and $p^m$ times Equation ~\ref{eqn: two} gives
\begin{equation}
\alpha'(n+xc)\equiv -\beta'+ p^{m+1}\pmod{p^{m+2}}
\end{equation}
and therefore $v_p(\alpha'(n+xc)+\beta')=m+1$. So by Equation ~\ref{eqn: contradiction},
\begin{equation}
m\equiv m+1\pmod{d}.
\end{equation}
As $d>1$, this is a contradiction.
\end{proof}

\begin{prop}\label{prop: v_p automaticity G_a}
Let $a\in\Z_+$ and let $p$ and $\ell$ be primes with $\ell>p^{ap^a}$. If $p$ is odd, also assume that $(p,\ell-1)=1$. If $p=2$, instead assume that $\ell\equiv 7\pmod{8}$. Let the sequence $(a_n)$ with entries in $\Z/\ell\Z$ be defined by
\begin{equation*}
a_n=p^{ap^{v_p(n)}}\pmod{\ell}.
\end{equation*}
The sequence $(a_n)$ is not $\ell$-automatic.
\end{prop}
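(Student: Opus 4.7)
The strategy will follow that of Proposition \ref{prop: v_p automaticity}: first show that $(a_n)$ is $p$-automatic, then assume it is also $\ell$-automatic and apply Cobham's theorem to obtain eventual periodicity, and finally engineer a pair of indices $n, n + xc$ on which periodicity forces a numerically impossible congruence. The hypotheses on $\ell$ are there precisely so that two things happen: $d := \ord_\ell(p)$ is coprime to $p$, and $d$ is too large to divide a certain small positive integer.

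For the $p$-automaticity step, the first task is to verify that $\gcd(p, d) = 1$. When $p$ is odd this is immediate from $d \mid \ell - 1$ together with $(p, \ell - 1) = 1$. When $p = 2$, the hypothesis $\ell \equiv 7 \pmod{8}$ makes $2$ a quadratic residue mod $\ell$, so $d \mid (\ell - 1)/2$, and this quotient is odd because $v_2(\ell - 1) = 1$. Writing $e = \ord_d(p)$, the residue $a p^{v_p(n)} \pmod{d}$ depends only on $v_p(n) \pmod{e}$, hence $a_n$ itself is a function of $v_p(n) \pmod{e}$. Since $n \mapsto v_p(n) \pmod{e}$ is $p$-automatic by \cite[Lem 6]{BridyActa}, Proposition \ref{prop: automatic closure} gives $p$-automaticity of $(a_n)$.

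For the contradiction, suppose $(a_n)$ is $\ell$-automatic. Cobham's theorem yields an eventual period $c$; write $c = p^m c'$ with $(p, c') = 1$. For any sufficiently large $n = p^{m+1} n'$ with $(p, n') = 1$ and any positive $x$ coprime to $p$, we have $n + xc = p^m(p n' + x c')$ with $p n' + x c' \equiv x c' \not\equiv 0 \pmod{p}$, hence $v_p(n + xc) = m$ while $v_p(n) = m + 1$. The equation $a_n = a_{n + xc}$ then reads $p^{a p^{m+1}} \equiv p^{a p^m} \pmod{\ell}$, forcing $d \mid a p^m (p - 1)$ and therefore $d \mid a(p - 1)$ by coprimality.

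The final step exploits the size bound. Since $p - 1 \leq p^a$, we have $a(p - 1) \leq a p^a$, so $0 < p^{a(p-1)} - 1 \leq p^{a p^a} - 1 < \ell$, which forces $\ell \nmid p^{a(p-1)} - 1$, i.e.\ $d \nmid a(p - 1)$, contradicting the previous paragraph. No step is a serious obstacle; the only delicate point is the $p = 2$ case, where the specific congruence $\ell \equiv 7 \pmod{8}$ (rather than merely the weaker condition that $2$ be a quadratic residue) is needed to ensure that $(\ell - 1)/2$ is odd, and hence that $d$ itself is odd.
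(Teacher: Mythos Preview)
Your proof is correct and follows essentially the same strategy as the paper: establish $p$-automaticity via the dependence of $a_n$ on $v_p(n)$ modulo a fixed integer, invoke Cobham to obtain eventual periodicity, and then exhibit indices $n$ and $n+xc$ whose $p$-adic valuations differ by $1$ to force a contradiction from the size hypothesis $\ell>p^{ap^a}$. The only cosmetic differences are that you work with $d=\ord_\ell(p)$ rather than the paper's $\ord_\ell(p^a)$, you deploy the size bound at the end (to rule out $d\mid a(p-1)$) rather than at the start (to force $e\ge 2$), and your choice of $n=p^{m+1}n'$ and $x$ coprime to $p$ is a bit more direct than the congruence-solving the paper borrows from Proposition~\ref{prop: v_p automaticity}.
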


\begin{proof}
Let $d$ be the multiplicative order of $p^a$ mod $\ell$. Since $\ell>p^{ap^a}$, we have $d>p^a$. The sequence $a_n$ is a function of the equivalence class of $p^{v_p(n)}$ mod d.

First assume that $p$ is odd. Then $(p^a,\ell-1)=1$, and as $d$ divides $\ell-1$, also $(p^a,d)=1$. Let $e$ be the multiplicative order of $p^a$ mod $d$, and note that $e\geq 2$. So $a_n=a_m$ iff $p^{v_p(n)}\equiv p^{v_p(m)}\pmod{d}$ iff $v_p(n)\equiv v_p(m)\pmod{e}$. In particular, $a_n$ is a function of the equivalence class of $v_p(n)$ mod $e$. By ~\cite[Lem 6]{BridyActa}, $(a_n)$ is $p$-automatic.

If instead $p=2$, then $\ell\equiv 7\pmod{8}$, so 2 is a quadratic residue mod $\ell$. It follows that $d$ divides $\frac{\ell-1}{2}$, so $d$ is odd and $(p,d)=1$. Let $e$ be the multiplicative order of $p$ mod $d$, and again note that $e\geq 2$, so again $a_n=a_m$ if and only if $v_p(n)\equiv v_p(m)\pmod{e}$. In particular, $(a_n)$ is $p$-automatic.

Assume that $(a_n)$ is $\ell$-automatic. By Cobham's theorem, $(a_n)$ is eventually periodic. Let $c$ be its eventual period. Therefore, for $n$ large and $x>0$,
\begin{equation}
v_p(n+xc)\equiv v_p(n)\pmod{e}.
\end{equation}
This is impossible by the proof of Proposition ~\ref{prop: v_p automaticity}; simply set $\alpha=1$ and $\beta=0$.
\end{proof}

\section{Proof of Theorem 1.2}\label{sec: Proof 1.2}

Let $k$ be an algebraically closed field of characteristic $p$, and let $f\in k(x)$ be a separable power map, Chebyshev polynomial, or Latt\`es map. Let $\zeta=\zeta(f,\P^1(k);t)$. Assume by way of contradiction that $\zeta$ is algebraic over $\Q(t)$. Its derivative $\zeta'$ is algebraic, so its logarithmic derivative $\zeta'/\zeta$ is algebraic. We compute
\begin{equation*}
\zeta'/\zeta=(\log\zeta)'=\sum_{n\geq 1}\#\Per_n(f)t^{n-1}.
\end{equation*}
 By Corollary ~\ref{cor: ChristolCor}, for any prime $\ell$, the reduction of the sequence $(\#\Per_n(f))$ mod $\ell$\ is $\ell$-automatic. By carefully choosing $\ell$, we will produce a contradiction, showing that $\zeta$ is transcendental.

\subsection{Power Maps and Chebyshev Polynomials}
Let $f$ be the power map $f(x)=x^d$, and note that $(p,d)=1$ because $f$ is separable. Let $m$ be even and such that $d^m\equiv 1\pmod{p}$. Let $\ell>p$ be a prime to be determined, and consider the sequence $(a_n)$ with entries in $\F_\ell$ given by
\begin{equation*}
a_n = \#\Per_{mn}(f)\pmod{\ell}.
\end{equation*}
By Proposition ~\ref{prop: automatic closure}, $(a_n)$ is $\ell$-automatic. As $mn$ is even, Equation ~\ref{eqn: periodic count power map} and Proposition ~\ref{lem: Lifting the Exponent Number Field} give
\begin{equation*}
a_n = 2+\frac{d^{mn}-1}{p^{v_p(d^{mn}-1)}}=2+(d^{mn}-1)(p^{-1})^{v_p(d^{m}-1)+v_p(n)}.
\end{equation*}

First suppose $p$ is odd. Note that $(d^{mn}-1)=d^m-1$ when $n\equiv 1\pmod{\ell-1}$, and that $d^m-1\not\equiv 0\pmod{\ell}$. Consider the subsequence 
\begin{equation*}
b_n=(d^m-1)(p^{v_p(d^m-1)})^{-1}(a_{(\ell-1)n+1}-2)^{-1},
\end{equation*}
which is $\ell$-automatic by proposition ~\ref{prop: automatic closure}. Then
\begin{equation*}
b_n= p^{v_p((\ell-1)n+1)}.
\end{equation*}
Choose $\ell$ such that $\ell\equiv 2\pmod{p}$. Then $v_p(\ell-1)=0=v_p(1)$, and by Proposition ~\ref{prop: v_p automaticity}, $(b_n)$ is not automatic, which is a contradiction.

Now suppose $p=2$. Let $m=2$, so that $d^m\equiv 1\pmod{4}$ (the separability of $f$ forces $d$ to be odd). Let $b_n$ be given by
\begin{equation*}
b_n=(d^{2}-1)(p^{v_p(d^2-1)})^{-1}(a_{(\ell-1)n+2}-2)^{-1},
\end{equation*}
so that
\begin{equation*}
b_n= p^{v_p((\ell-1)n+2)}.
\end{equation*}
Choose $\ell$ such that $\ell\equiv 3\pmod{4}$ and $(\ell,d^2-1)=1$. Then $v_p(\ell-1)=1=v_p(2)$, and again Proposition ~\ref{prop: v_p automaticity} gives a contradiction.

Let $f$ be the $d$th Chebyshev polynomial. Again it must be true that $(p,d)=1$, because otherwise the $\psi(x)$ that fits into Diagram ~\ref{eqn: dynamically affine} factors through the $p$th power map and is inseparable, so $f$ is also inseparable. Again let $m$ be such that $d^m\equiv 1\pmod{p}$, and let $a_n = \#\Per_{mn}(f)\pmod{\ell}$ for some $\ell$ to be determined. By equation ~\ref{eqn: periodic count Chebyshev polynomial} and Proposition ~\ref{lem: Lifting the Exponent Number Field},
\begin{align*}
a_n & = 1+\frac{1}{2}\left(\frac{d^{mn}-1}{p^{v_p(d^{mn}-1)}}+\frac{d^{mn}+1}{p^{v_p(d^{mn}+1)}}\right)\\
& =1+\frac{1}{2}\left((d^{mn}-1)(p^{-1})^{v_p(d^{m}-1)+v_p(n)}+(d^{mn}+1)(p^{-1})^{v_p(d^{mn}+1)}\right) .
\end{align*}

First suppose $p$ is odd. Then $v_p(d^{mn}+1)=0$ because $v_p(d^{mn}-1)>0$ by Proposition ~\ref{lem: Lifting the Exponent Number Field}. The sequence
\begin{equation*}
n\mapsto(d^{mn}+1)
\end{equation*}
is periodic, and so is $\ell$-automatic. Let $b_n$ be defined by
\begin{equation*}
b_n=(2(a_{(\ell-1)n+1}-1)-(d^{m(\ell-1)n+1}))(d^m-1)^{-1}(p^{v_p(d^m-1)}).
\end{equation*}
As before, $b_n$ is $\ell$-automatic, but
\begin{equation*}
b_n^{-1}= p^{v_p((\ell-1)n+1)}.
\end{equation*}
Choosing $\ell$ such that $\ell \equiv 2\pmod{p}$ gives a contradiction by Proposition ~\ref{prop: v_p automaticity}.

If $p=2$, then let $m=2$. In this case, $d^{2n}\equiv 1\pmod{4}$ for all $n$, so $v_p(d^{2n}+1)=1$. The sequence
\begin{equation*}
n\mapsto(d^{2n}+1)
\end{equation*}
is still eventually periodic, so using the same manipulations as above, the sequence
\begin{equation*}
b_n^{-1}= p^{v_p((\ell-1)n+2)}
\end{equation*}
is $\ell$-automatic. If we pick $\ell$ such that $\ell\equiv 3\pmod{4}$ and $(\ell,d^2-1)=1$, then $v_p(\ell-1)=1$ and again this is a contradiction by Proposition ~\ref{prop: v_p automaticity}.

\subsection{Latt\`es Maps} Let $f$ be a Latt\`es map associated to the elliptic curve $E$. If $j(E)$ is transcendental over $\overline{\F}_p$, then $\sigma\in\Z$, and as $f$ is separable we have $(p,\sigma)=1$ (otherwise $\sigma$ would be inseparable). By equation ~\ref{eqn: Lattes transcendental},
\begin{equation*}
\#\Per_n(f) = \frac{1}{2}\left(\frac{|\sigma^n-1|}{p^{v_p(\sigma^n-1)}} + \frac{|\sigma^n+1|}{p^{v_p(\sigma^n+1)}}\right).
\end{equation*}
If $\sigma>0$, then this is the same as the periodic point count for the degree $|\sigma|$ Chebyshev polynomial $T_{|\sigma|}$, and we have already shown that there exists an $\ell$ such that $\#\Per_n(f)\pmod{\ell}$ is not $\ell$-automatic. In fact, the argument also goes through if $\sigma<0$, as we simply choose an even $m$ and use the subsequence $\#\Per_{mn}(f)\pmod{\ell}$, in which case $\sigma^{mn}$ is always positive.

Now suppose $j(E)\in\overline{\F}_p$ and $E$ is ordinary. The ring $\End(E)$ is an order in an imaginary quadratic field $K$, and $\Aut(E)$ is cyclic of order 2,4, or 6. Therefore $\Gamma$ is isomorphic to one of $\mu_2,\mu_3,\mu_4$, or $\mu_6$. By Proposition ~\ref{prop: inseparable degree ordinary}, there is a prime $\mathfrak{p}$ of $\mathcal{O}_K$ such that
\begin{equation*}
\#\Per_n(f)= \frac{1}{|\Gamma|}\sum_{\gamma\in\Gamma}\frac{N(\sigma^{n}-\gamma)}{p^{v_\mathfrak{p}(\sigma^{n}-\gamma)}},
\end{equation*}
and $\mathfrak{p}$ is the extension to $\mathcal{O}_K$ of the ideal in $\End(E)$ consisting of all inseparable isogenies, so $\sigma\notin\mathfrak{p}$. 

Assume for the moment that $p\notin\{2,3\}$. Let $m$ be the multiplicative order of the image of $\sigma$ in the residue field $\mathcal{O}_K/\mathfrak{p}$, so that $\sigma^m-1\in\mathfrak{p}$. By Proposition ~\ref{lem: Lifting the Exponent Number Field}, 
\begin{equation*}
v_\mathfrak{p}(\sigma^{mn}-1)=v_\mathfrak{p}(\sigma^m-1)+v_p(n).
\end{equation*}
In particular, $\sigma^{mn}-1\in\mathfrak{p}$ for $n\geq 1$. We argue that $\sigma^{mn}-\gamma\notin\mathfrak{p}$ for $\gamma\in\Gamma\setminus \{1\}$. 

If $\sigma^{mn}-\gamma$ were in $\mathfrak{p}$, then $1-\gamma$ would be in $\mathfrak{p}$, and $p=N(\mathfrak{p})$ would divide $N(1-\gamma)$. We compute
\begin{equation*}
N(1-\gamma)=(1-\gamma)(1-\hat{\gamma})=1-\tr(\gamma)+N(\gamma)=2-\tr(\gamma).
\end{equation*}
As $\gamma$ is a root of the $k$th cyclotomic polynomial for some $k\in\{2,3,4,6\}$, it follows that $\tr(\gamma)\in\{-2,-1,0,1\}$. Therefore $N(1-\gamma)\in\{1,2,3,4\}$, so $v_p(N(1-\gamma))=0$ and $1-\gamma\notin\mathfrak{p}$, so $\sigma^{mn}-\gamma\notin\mathfrak{p}$. Therefore
\begin{equation*}
\#\Per_{mn}(f)=  \frac{1}{|\Gamma|}\frac{N(\sigma^{mn}-1)}{p^{v_\mathfrak{p}(\sigma^m-1)+v_p(n)}} + \frac{1}{|\Gamma|}\sum_{\gamma\in\Gamma\setminus\{1\}}N(\sigma^{mn}-\gamma).
\end{equation*}

Let $\ell>p$ be a rational prime to be determined. For now, assume that $(\ell, N(\sigma))=(\ell,|\Gamma|)=1$. Each sequence $n\mapsto N(\sigma^{mn}-\gamma)\pmod{\ell}$ is periodic and therefore $\ell$-automatic. It follows that the sequence given by
\begin{equation*}
a_n=\frac{1}{|\Gamma|}\frac{N(\sigma^{mn}-1)}{p^{v_\mathfrak{p}(\sigma^m-1)+v_p(n)}}\pmod{\ell}
\end{equation*}
is $\ell$-automatic. By Proposition ~\ref{prop: norm recurrent}, the sequence $n\mapsto N(\sigma^{mn}-1)\pmod{\ell}$ is periodic of period dividing $(\ell-1)(\ell^2-1)\ell^A$ for some $A$. Let $c$ be this period. Choose $\ell\equiv 2\pmod{p}$ subject to the previous restrictions on $\ell$ and such that $(\ell,N(\sigma^m-1))=1$. Therefore
\begin{equation*}
N(\sigma^{m(cn+1)}-1)\equiv N(\sigma^m-1)\not\equiv 0\pmod{\ell}.
\end{equation*}
By the closure properties of Proposition ~\ref{prop: automatic closure}, the sequence given by
\begin{equation*}
n\mapsto p^{v_p(c n+1)}\pmod{\ell}
\end{equation*}
is $\ell$-automatic. We know $(p,\ell)=(p,\ell-1)=(p,\ell^2-1)=1$, so $v_p(c)=0$. So $v_p(c)\leq v_p(1)=1$, and this is a contradiction by Proposition ~\ref{prop: v_p automaticity}.

If $p\in\{2,3\}$, then $\Aut(E)\cong\mu_2$, as the only possible larger automorphism groups in characteristic 2 or 3 are nonabelian and cannot be realized as subgroups of $\mathcal{O}_K^\times$ ~\cite[Appendix A]{SilvermanEllipticCurves}. Therefore $\Gamma=\Aut(E)=\{\pm 1\}$. 

First assume that $p=3$. Everything in the above argument holds, except possibly that $c$, the period of $n\mapsto N(\sigma^{mn}-1)\pmod{\ell}$, might be divisible by $3$. Choose $\ell$ such that $\ell\equiv 2\pmod{9}$ and $(\ell,N(\sigma^{3m}-1))=1$. By Proposition ~\ref{prop: norm recurrent}, $v_3(c)\leq v_3((\ell-1)(\ell^2-1)\ell^A)=1$. The contradiction by Proposition ~\ref{prop: v_p automaticity} now comes from manipulating $(a_{cn+3})$ to produce the sequence 
\begin{equation*}
n\mapsto 3^{v_3(c n+3)}\pmod{\ell}.
\end{equation*}

Now assume that $p=2$. As $\sigma\notin\mathfrak{p}$, the image of $\sigma$ in the finite ring $\mathcal{O}_K/\mathfrak{p}^2$ is invertible. Let $m$ be the multiplicative order of the image of $\sigma$, so that $\sigma^m-1\in\mathfrak{p}^2$. Using Lemma ~\ref{lem: Lifting the Exponent Number Field} and following the above argument, we arrive at
\begin{equation*}
\#\Per_{mn}(f) =  \frac{1}{2}\left(\frac{N(\sigma^{mn}-1)}{2^{v_\mathfrak{p}(\sigma^{mn}-1)}}+\frac{N(\sigma^{mn}+1)}{2^{v_\mathfrak{p}(\sigma^{mn}+1)}}\right)
\end{equation*}
As $v_\mathfrak{p}(\sigma^m-1)\geq 2$, we have $v_\mathfrak{p}(\sigma^{mn}-1)\geq 2$ for all $n$. By properties of valuations,
\begin{equation*}
v_\mathfrak{p}(\sigma^{mn}+1)= \min(v_\mathfrak{p}(\sigma^{mn}-1),v_\mathfrak{p}(2))=1,
\end{equation*}
where $v_\mathfrak{p}(2)=1$ because $2$ splits in $K$. Therefore
\begin{equation*}
\#\Per_{mn}(f) =  \frac{1}{2}\left(\frac{N(\sigma^{mn}-1)}{2^{v_\mathfrak{p}(\sigma^{mn}-1)}}+\frac{N(\sigma^{mn}+1)}{2}\right).
\end{equation*} 

Again the only difficulty is that $c$, the period of $n\mapsto N(\sigma^{mn}-1)\pmod{\ell}$, might be even. Choose $\ell\equiv 3\pmod{8}$ such that $(\ell,N(\sigma^{16m}-1))=1$. Now $v_2(c)\leq v_2((\ell-1)(\ell^2-1)\ell^A)=4$, and the same contradiction comes from the sequence
\begin{equation*}
n\mapsto 2^{v_2(c n+16)}\pmod{\ell}.
\end{equation*}

Now suppose that $E$ is supersingular. In this case $\End(E)$ can be identified with a maximal order $\mathcal{O}$ of a rational quaternion algebra $B$ that is ramified only at $p$ and $\infty$.  Let $I$ be the two-sided maximal ideal of $\mathcal{O}$ from Proposition ~\ref{prop: inseparable degree supersingular}. As $\sigma:E\to E$ is separable, $\sigma\notin I$. Let $m$ be the multiplicative order of $\sigma$ in the residue field $\mathcal{O}/I$, so that $v_I(\sigma^m-1)\geq 1$. 

Assume first that $p\notin\{2,3\}$ so that $\Gamma\cong\mu_2$, $\mu_3$, $\mu_4$, or $\mu_6$. By Proposition ~\ref{prop: inseparable degree supersingular} and Lemma ~\ref{lem: Lifting the Exponent Quaternion Algebra},
\begin{equation*}
v_I(\sigma^{mn}-1)=v_I(\sigma^m-1) + 2v_p(n).
\end{equation*}
As in the case of $E$ ordinary, for $\xi\in\Gamma\setminus\{1\}$, $\xi$ is a root of the $k$th cyclotomic polynomial some for $k\in\{2,3,4,6\}$, so $N(1-\xi)\in\{1,2,3,4\}$. Norm considerations show that $\sigma^{mn}-\xi\notin I$ for $n\geq 1$, so $\#\ker(\sigma^{mn}-\xi)=N(\sigma^{mn}-\xi)$. Therefore
\begin{equation*}
\#\Per_{mn}(f) =  \frac{1}{|\Gamma|}\frac{N(\sigma^{mn}-1)}{p^{v_I(\sigma^m-1)+2v_p(n)}} + \frac{1}{|\Gamma|}\sum_{\xi\in\Gamma\setminus\{1\}}N(\sigma^{mn}-\xi).
\end{equation*}
The same reasoning as in the ordinary case shows that there is a prime $\ell$ such that $\#\Per_{mn}(f)\pmod{\ell}$ is not $\ell$-automatic. 

Now assume that $p\in\{2,3\}$. If $j(E)\neq 0$, then $\Aut(E)=\{\pm 1\}$, and the argument proceeds exactly as when $E$ is ordinary. Therefore assume that $j(E)=0$. For both $p=2$ and $p=3$, the maximal order $\mathcal{O}$ has trivial class group and so is the unique maximal order of $B$ up to conjugacy ~\cite[Ch 1, Cor 4.11]{Vigneras}. Therefore, for the purposes of identifying $\End(E)$ with $\mathcal{O}$, we may take $\mathcal{O}$ to be any maximal order of $B$.

First let $p=2$. In this case, $B$ is the Hamilton quaternions $\left(\frac{-1,-1}{\Q}\right)=\Q(i,j)$ where $i^2=j^2=-1$ and $k=ij$. A maximal order of $B$ is given by the Hurwitz quaternions
\begin{equation*}
\mathcal{O} = \Z i+ \Z j + \Z k + \Z(1+i+j+k)/2.
\end{equation*}
Here $\Aut(E)\cong\SL_2(\F_3)$ can be described explicitly as
\begin{equation*}
\Aut(E)\cong\mathcal{O}^\times=\{\pm 1, \pm i, \pm j, \pm k, (\pm 1\pm i\pm j\pm k)/2\}.
\end{equation*}
For $\gamma\in\Aut(E)$, explicit calculation of norms shows that $v_I(1-\gamma)=0$ unless $\gamma\in\{\pm 1,\pm i,\pm j, \pm k\}\cong Q_8$, which is the unique Sylow 2-subgroup of $\Aut(E)$. As $\sigma\notin I$, the image of $\sigma$ is invertible in $\mathcal{O}/I^3$. Pick $m$ such that $\sigma^m- 1\in I^3$, i.e. $v_I(\sigma^m-1)\geq 3$. Then for $\gamma\neq 1$,
\begin{equation*}
   v_I(\sigma^{mn}-\gamma)=\min\{v_I(\sigma^{mn}-1),v_I(1-\gamma)\} = \left\{
     \begin{array}{ll}
       2 & : \gamma =-1 \\
       1 & : \gamma= \pm i, \pm j, \pm k\\
       0 & : \text{otherwise}
     \end{array}
   \right.
\end{equation*}
So for each $\gamma\in\Gamma$, there exists a constant $C(\gamma)$ such that
\begin{equation*}
\#\Per_{mn}(f) =  \frac{1}{|\Gamma|}\left(\frac{N(\sigma^{mn}-1)}{2^{v_I(\sigma^m-1)+2v_2(n)}} + \sum_{\gamma\in\Gamma\setminus\{1\}}\frac{N(\sigma^{nm}-\gamma)}{C(\gamma)}\right)
\end{equation*}
Choosing $\ell$ appropriately, we can reduce this to the sequence
\begin{equation*}
n\mapsto 4^{v_2(cn + 16)}
\end{equation*}
and get a contradiction as before.

Now let $p=3$, so that $B=\left(\frac{-1,-3}{\Q}\right)=\Q(i,j)$ where $i^2=-1$, $j^2=-3$, and $k=ij$. A maximal order $\mathcal{O}$ is given by 
\begin{equation*}
\mathcal{O} = \Z + \Z i + \Z(1+j)/2 + \Z(i+k)/2
\end{equation*}
and again $\Aut(E)\cong C_3\rtimes C_4$ has an explicit description as
\begin{equation*}
\Aut(E)\cong\mathcal{O}^\times = \{\pm 1, \pm i, (\pm 1\pm j)/2, (\pm i\pm k)/2 \}.
\end{equation*}
For $\gamma\in\Aut(E)$, another norm calculation shows that $v_I(1-\gamma)=0$ unless $\gamma\in\{1,\frac{-1\pm j}{2}\}\cong C_3$, the unique Sylow 3-subgroup of $\Aut(E)$. Pick $m$ such that $v_I(\sigma^m-1)\geq 2$. If $\gamma\neq 1$,
\begin{equation*}
   v_I(\sigma^{mn}-\gamma)=\min\{v_I(\sigma^{mn}-1),v_I(1-\gamma)\} = \left\{
     \begin{array}{ll}
       1 & : \gamma =(-1\pm j)/2 \\
       0 & : \text{otherwise}
     \end{array}
   \right.
\end{equation*}
As above, there are $C(\gamma)$ so that
\begin{align*}
\#\Per_{mn}(f) =  \frac{1}{|\Gamma|}&\left(  \frac{N(\sigma^{mn}-1)}{3^{v_I(\sigma^m-1)+2v_3(n)}} + \sum_{\gamma\in\Gamma\setminus\{1\}} \frac{N(\sigma^{mn}-\gamma)}{C(\gamma)}  )\right).
\end{align*}
We can reduce this to the sequence
\begin{equation*}
n\mapsto 9^{v_3(c_n+3)}
\end{equation*}
and the same argument gives a contradiction.

\section{Proof of Theorem 1.3}\label{sec: Proof 1.3}

Let $k$ be an algebraically closed field of characteristic $p$, and let $f\in k[x]$ be an additive or subadditive polynomial. The map $f$ fits into Diagram ~\ref{eqn: dynamically affine} with $G=\G_a$, $\Gamma\cong\mu_d$, and $\pi(x)=x^d$, where possibly $d=1$. Therefore $\psi(x)^d=f(x^d)$. As usual, let $\psi$ be the endomorphism $\sigma$ composed with a translation. 

Let $a$ be the constant term of $\sigma\in k\langle\phi\rangle$, so that $\sigma=a+(\phi)$ (that is, $\sigma(x)=ax + g(x)$ for some $g\in(\phi)$). If $d=1$, then $f(x)$ is $\sigma(x)$ composed with a translation, so $f'(0)=a$. If $d\geq 2$, then $\psi(\omega_d x)=\omega_d\psi(x)$, so $\psi(0)=0$ and $\psi=\sigma$. So $\sigma^d=a^d + (\phi)$, and $f'(0)=a^d$. Therefore $f'(0)$ is algebraic if and only if $a$ is algebraic.

By equation ~\ref{eqn: periodic count additive polynomial}, 
\begin{equation*}
\#\Per_n(f) = 1 + \frac{1}{d}\sum_{\omega\in\mu_d}\frac{(\deg\sigma)^n}{p^{v_\phi(\sigma^n-\omega)}}.
\end{equation*}
As $\sigma$ is separable, $v_\phi(\sigma)=0$, so $a\neq 0$.

If $f'(0)$ is transcendental, then so is $a$. The constant term of $\sigma^n-\omega$ is $a^n-\omega$, which is never zero (if it were, $a$ would be algebraic). Therefore
\begin{equation*}
\#\Per_n(f) = 1 + \frac{1}{d}\sum_{\omega\in\mu_d}(\deg\sigma)^n = 1+(\deg\sigma)^n.
\end{equation*}
It follows easily that $\zeta(f,\P^1(k);t)$ is rational.

Suppose $f'(0)$ is algebraic over $\F_p$, so $a$ is algebraic and therefore is a root of unity. Aiming for a contradiction, assume that $\zeta(f,\P^1(k);t)$ is algebraic. There exists $m$ such that the image of $\sigma^m$ in $k\langle\phi\rangle/(\phi)$ is 1, and so $\sigma^m-1\in(\phi)$. By Lemma ~\ref{lem: Lifting the Exponent G_a}, for $n\geq 1$,
\begin{equation*}
v_\phi(\sigma^{mn}-1)=v_\phi(\sigma^m-1)p^{v_p(n)}.
\end{equation*}
In particular, $\sigma^{mn}-1\in(\phi)$. Therefore, for $\omega\in\mu_d\setminus\{1\}$, we have $\sigma^{mn}-\omega\notin (\phi)$, because otherwise $1-\omega$ would be in $(\phi)$. But $1-\omega\in k\langle\phi\rangle$ represents the linear polynomial $x\mapsto (1-\omega)x$, which does not factor through $\phi:x\mapsto x^p$. Therefore
\begin{equation*}
\#\Per_{mn}(f) = 1 + \frac{1}{d}\left(\frac{(\deg\sigma)^n}{p^{v_\phi(\sigma^m-1)p^{v_p(n)}}} + \sum_{\omega\in\mu_d\setminus\{1\}}(\deg\sigma)^n\right).
\end{equation*}
Let $\ell$ be a large prime. If $p$ is odd, choose $\ell$ such that $\ell\equiv 2\pmod{p}$, and if $p=2$, choose $\ell\equiv 7\pmod{8}$. Let $a_n=\#\Per_{mn}\pmod{\ell}$. Note that $n\mapsto (\deg\sigma)^n$ is periodic and therefore $\ell$-automatic. By Proposition ~\ref{prop: automatic closure}, we can manipulate $a_n$ to arrive at the sequence
\begin{equation*}
b_n=(\deg\sigma)^n p^{-p^{\left(v_\phi(\sigma^m-1)p^{v_p(n)}\right)}},
\end{equation*}
which is $\ell$-automatic. The subsequence $b_{(\ell-1)n}$ is $\ell$-automatic, as is its reciprocal
\begin{equation*}
(b_{(\ell-1)n})^{-1} = p^{p^{\left(v_\phi(\sigma^m-1)p^{v_p(n)}\right)}}.
\end{equation*}
For a large enough $\ell$, the above sequence satisfies the assumptions of Proposition ~\ref{prop: v_p automaticity G_a}, so it is not $\ell$-automatic, which is a contradiction.

\subsection*{Acknowledgements}
We would like to thank Michael Zieve for drawing our attention to the results in ~\cite{GhiocaZieve}, Tonghai Yang for helpful advice regarding quaternion algebras,  and Bjorn Poonen for pointing out the counterexample to Conjecture ~\ref{conj: separable} in the case where $k$ contains transcendentals over $\F_p$. This research was partly supported by NSF Grant no. EMSW21-RTG and by the Wisconsin Alumni Research Foundation.

\bibliographystyle{plain}
\bibliography{new_bib}
\nocite{*}

\end{document}